\DeclareMathOperator{\vol}{vol}
\DeclareMathOperator{\Var}{Var}
\DeclareMathOperator{\im}{im}
\DeclareMathOperator{\Riemann}{Rm}
\DeclareMathOperator{\Ricci}{Ric}
\DeclareMathOperator{\Bl}{Bl}
\newcommand{\Z}{\mathbb{Z}}
\newcommand{\R}{\mathbb{R}}
\newcommand{\C}{\mathbb{C}}
\newcommand{\Hau}{\mathcal{H}}
\newcommand{\Proj}{\mathbb{P}}
\newcommand{\mytag}[2]{%
\text{#1}%
\@bsphack
\begingroup
\@onelevel@sanitize\@currentlabelname
\edef\@currentlabelname{%
\expandafter\strip@period\@currentlabelname\relax.\relax\@@@%
}%
\protected@write\@auxout{}{%
\string\newlabel{#2}{%
{#1}%
{\thepage}%
{\@currentlabelname}%
{\@currentHref}{}%
}%
}%
\endgroup
\@esphack
}
\DeclareFontFamily{U}{mathb}{\hyphenchar\font45}
\DeclareFontShape{U}{mathb}{m}{n}{
<5> <6> <7> <8> <9> <10> gen * mathb
<10.95> mathb10 <12> <14.4> <17.28> <20.74> <24.88> mathb12
}{}
\DeclareSymbolFont{mathb}{U}{mathb}{m}{n}
\DeclareMathSymbol{\bigast}{2}{mathb}{"06}
\def\XXint#1#2#3{{\setbox0=\hbox{$#1{#2#3}{\int}$}
\vcenter{\hbox{$#2#3$}}\kern-.5\wd0}}
\theoremstyle{plain}
\newtheorem{theorem}{Theorem}[section]
\newtheorem{lemma}[theorem]{Lemma}
\newtheorem{corollary}[theorem]{Corollary}
\newtheorem{conjecture}[theorem]{Conjecture}
\newtheorem{claim}[theorem]{Claim}
\theoremstyle{definition}
\theoremstyle{remark}
\newtheorem{remark}[theorem]{Remark}
\Crefname{enumi}{Property}{Properties}
\Crefname{alternativei}{Alternative}{Alternatives}
\Crefname{subsection}{Subsection}{Subsections}
\numberwithin{equation}{section}
\title[]{on the Tangent flow to the collapsing K\"{a}hler-Ricci flow on Hirzebruch Surfaces}
\author{Jiangtao Li}
\address{Department of Mathematics, UC San Diego, 9500 Gilman Drive, La Jolla, CA 92093-0112}
\email{jil320@ucsd.edu}
\date{}
\begin{document}

\maketitle
\begin{abstract}
    In this paper, we study the collpasing K\"{a}hler-Ricci flow on Hirzebruch surfaces $M_k=\Proj(\mathcal{O}\oplus\mathcal{O}(k))$. We show that any tangent flow based at a spacetime point at the singular time slice is the K\"{a}hler-Ricci flow associated with a nonflat K\"{a}hler-Ricci shrinker with finitely many orbifold singularities .
\end{abstract}

\section{Introduction}

Given a compact K\"{a}hler surface $M$ with a K\"{a}hler metric $\omega_0$ on $M$. The K\"{a}hler-Ricci flow on $M$ with inital metric $\omega_0$ is the following equation:
\begin{equation}\label{eq:KahlerRicciFlow}
    \begin{cases}
        &\frac{\partial\omega}{\partial t}=-\Ricci(\omega(t)),\\
        &\omega(0)=\omega_0.
    \end{cases}
\end{equation}

Suppose that the maximal existence time of the flow is $0<T\leq\infty$. When $T<\infty$, the flow develops a \textit{finite time singularity} at $t=T$. Finite time singularities are further classified into two categories according to the blow up rate of the Riemann curvature: 
\begin{enumerate}
    \item If 
        \[\limsup_{t\to T}(T-t)\sup_{x\in M}|\Riemann(x,t)|<\infty,\] 
    the finite time singularity is a \textit{type I} singularity;
    \item If 
        \[\limsup_{t\to T}(T-t)\sup_{x\in M}|\Riemann(x,t)|=\infty,\]
    the finite time singularity is a \textit{type II} singularity.
\end{enumerate}

Understanding the limiting behaviors of Kähler-Ricci flows at finite-time singularities is an active area of research, with significant progress made over the past decades. Several studies related to this work are: (1) In \cite{CW12}, Chen and Wang examined the K\"{a}hler-Ricci flow on Fano surfaces with canonical initial metric, demonstrating that in these cases the singularity is type I and that the metric converges to a shrinking soliton metric after a type I normalization; (2) In \cite{SW11}, Song and Weinkove investigated the K\"{a}hler-Ricci flow on Hirzebruch surfaces with an initial metric satisfying the Calabi Ansatz, classifying the Gromov-Hausdorff limits at the singular time based on the cohomology class of the initial metric; (3) A recent paper \cite{BCCD24} shows that the K\"{a}hler-Ricci flow on $\Bl_p(\mathbb{P}^1\times\mathbb{P}^1)$ with a torus invariant initial metric always develops a type I singularity. Utilizing this along with an earlier result in \cite{EMT11}, the authors constructed a shrinking gradient K\"{a}hler-Ricci soliton structure implicitly on $\Bl_p(\C\times\mathbb{P}^1)$, which presents an alternative approach to the construction other than solving Monge-Amp\`{e}re equations. Together with the classification theorem in \cite{CCD22}, this completes the classification of shrinking gradient K\"{a}hler-Ricci soliton surfaces with bounded scalar curvature (the boundedness assumption was removed in \cite{LW25} later). 

Partly motivated by the work \cite{BCCD24}. The following conjecture attracts attention recently:
\begin{conjecture}
    Finite time singularities of K\"{a}hler-Ricci flow on K\"{a}hler surfaces are all type I singularities.
\end{conjecture}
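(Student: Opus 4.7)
The plan is to argue by contradiction: suppose the K\"ahler-Ricci flow $\omega(t)$ on a compact K\"ahler surface $M$ develops a finite-time singularity at $t=T<\infty$ that is of type II, i.e.\ $(T-t)\sup_{M}|\Riemann|$ is unbounded as $t\nearrow T$. I would then extract two singularity models by different rescaling schemes. The first is a tangent flow obtained by type I parabolic rescaling at a fixed spacetime point $(x_0,T)$ by $(T-t_i)^{-1/2}$; this is the object analyzed in the main theorem of the present paper, and by the conjectured extension beyond Hirzebruch surfaces it should be a nonflat shrinking gradient K\"ahler-Ricci soliton on a four-orbifold with finitely many singular points. The second is a Hamilton-type limit: pick $(x_i,t_i)$ almost-maximizing $|\Riemann|$ on $M\times[0,t_i]$ with $t_i\nearrow T$, rescale parabolically by $Q_i = |\Riemann(x_i,t_i)|$, and pass to a Cheeger--Gromov subsequential limit to obtain a complete eternal ancient K\"ahler-Ricci flow of bounded curvature whose curvature supremum is attained at an interior spacetime point.

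The second step is to identify the Hamilton limit. Under suitable curvature positivity --- e.g.\ nonnegative holomorphic bisectional curvature, which in dimension two one would try to verify by a parabolic maximum principle adapted to the blow-up scale --- Hamilton's differential Harnack inequality together with attainment of the curvature maximum promotes the Hamilton limit to a gradient steady K\"ahler-Ricci soliton on a four-orbifold. Classification of such objects on complex surfaces is restrictive: the only known complete non-flat smooth example is Cao's $\U(2)$-invariant steady soliton, and only finitely many orbifold quotients are expected. One then derives a contradiction by comparing the two models: assign to any singularity model an asymptotic invariant such as Perelman's entropy or the Nash entropy (after Hein--Naber / Bamler), and show that its value on the original flow $\omega(t)$ near $(x_0,T)$ is computed simultaneously by the shrinker (negative and determined by the orbifold soliton) and by the steady Hamilton limit (zero). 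Matching forces the shrinker to be flat, contradicting its nonflatness.

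The main obstacle is producing the Hamilton limit in the first place. Perelman's $\kappa$-noncollapsing applies only down to the type I scale $(T-t)^{1/2}$, which is much coarser than the Hamilton scale $Q_i^{-1/2}$, so the rescaled flows may collapse with bounded curvature and no smooth four-dimensional limit need exist in the Cheeger--Gromov sense. A full proof therefore seems to require either a K\"ahler-specific noncollapsing at all scales --- perhaps through pluripotential arguments or via the cohomological structure of the collapsing flow, in the spirit of \cite{SW11} --- or the use of Bamler's $\mathbb{F}$-convergence together with four-dimensional $\varepsilon$-regularity to extract a metric-measure model at the Hamilton scale, followed by an orbifold smoothing step identifying it with a genuine steady K\"ahler-Ricci soliton. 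Bridging the gap between the type I tangent-flow machinery that underlies the present paper and a Hamilton-scale limit in the type II regime is the crux of the conjecture.
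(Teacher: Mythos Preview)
The statement you are attempting to prove is labeled a \emph{Conjecture} in the paper and is presented there as an open problem; the paper does not contain a proof of it. The paper's contribution (its Main Theorem) is restricted to volume-collapsing K\"ahler--Ricci flow on Hirzebruch surfaces, and even in that case it only shows that every tangent flow at the singular time is a nonflat orbifold K\"ahler--Ricci shrinker with finitely many singular points. It does \emph{not} conclude that the singularity is type I: as the paper itself notes, the known route from tangent flows to type I (used in \cite{BCCD24} and \cite{CHM25}) requires the tangent flows to be \emph{smooth}, and the presence of orbifold points is exactly the obstruction left open here.

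Your proposal is therefore not comparable to any proof in the paper; it is a sketch toward an open problem, and you have correctly flagged its principal gap (no noncollapsing at the Hamilton scale, so the curvature-scale blow-up limit need not exist). Two further issues deserve mention. First, your argument leans on a ``conjectured extension beyond Hirzebruch surfaces'' of the paper's main theorem; that extension is not available, and even the Hirzebruch case does not give a smooth shrinker, so the tangent-flow side of your dichotomy is weaker than you need. Second, the entropy-matching step is not well posed as written: the Nash entropy that the tangent flow computes is attached to the fixed base point $(x_0,T)$, whereas the Hamilton blow-up is based at a moving sequence $(x_i,t_i)$ with $|\Riemann(x_i,t_i)|\to\infty$, and there is no a priori reason these base points see the same limiting entropy. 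Without pinning both models to a common conjugate-heat-kernel center, the comparison ``shrinker entropy $=$ steady entropy $=0$'' does not follow.
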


This conjecture is false in dimensions higher than 2,as demonstrated by counterexamples presented in \cite{LTZ24} and \cite{MT22}. Recently, Conlon, Hallgren and Ma (cf.\cite{CHM25}) proved the above conjecture under the addition assumption that the flow is volume non-collapsing. 

Indeed, both \cite{BCCD24} and \cite{CHM25} utilized the compactness and structure theory of Ricci flows developed by Bamler in a series of papers (cf.\cite{Bam21a,Bam23,Bam21b}), which is particularly effective for dealing with Ricci flow in the absence of curvature bounds, especially in real dimension 4. These papers established their main results by examining the possible tangent flows of the K\"{a}hler-Ricci flow at the singular time. More specifically, both show that all the tangent flows are smooth under their settings and hence it follows that the singularity is type I. In this work, we adopt the same approach and analyze tangent flows of volume collapsing K\"{a}hler-Ricci flow on Hirzebruch surfaces. Our main theorem is the followng: 
\begin{theorem}[Main Theorem]\label{thm:MainTheorem}
    If $g(t),0\leq t<T$ is a K\"{a}hler-Ricci flow on the Hirzebruch surface $M_k$ which is volume collapsing, then any tangent flow at the singular time is the K\"{a}hler-Ricci flow associated to a nonflat gradient shrinking K\"{a}hler-Ricci soliton with finitely many orbifold singularities. 
\end{theorem}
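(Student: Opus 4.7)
The plan is to combine Bamler's compactness and partial regularity theory of Ricci flows (cf.\ \cite{Bam21a,Bam23,Bam21b}) with the preservation of the K\"{a}hler structure under parabolic rescaling, and then to use the volume collapsing hypothesis to exclude the flat (Gaussian) tangent flow. In real dimension four, Bamler's partial regularity already imposes strong constraints on the singular set of the limiting metric soliton, which together with the K\"{a}hler structure should deliver the orbifold conclusion.

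Given a spacetime sequence $(x_i,t_i)\to(x_\infty,T)$ and scales $\tau_i\to 0^+$, I would first consider the parabolic rescalings $g_i(s):=\tau_i^{-1}g(t_i+\tau_i s)$. By Bamler's $\mathbb{F}$-compactness, a subsequence converges to a metric soliton $\mathcal{X}_\infty$; his partial regularity theorem then asserts that $\mathcal{X}_\infty$ is a smooth gradient shrinking Ricci soliton away from a closed singular set $\Sigma$ of parabolic codimension at least $4$, so in real dimension $4$ the set $\Sigma$ is discrete in spacetime. Since each $g_i(s)$ is K\"{a}hler with respect to the fixed complex structure $J$ on $M_k$, smooth convergence on the regular part transfers $J$ to an integrable complex structure $J_\infty$ making the limit K\"{a}hler; hence the tangent flow is the K\"{a}hler-Ricci flow of a shrinking gradient K\"{a}hler-Ricci soliton, smooth off the discrete set $\Sigma$.

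The crucial step is to use the volume collapsing assumption to rule out the Gaussian shrinker on $\R^4$. Volume collapsing forces
\[
\sup_{x\in M}(T-t)^{-2}\mathrm{Vol}_{g(t)}\bigl(B_{g(t)}(x,\sqrt{T-t})\bigr)\longrightarrow 0 \quad \text{as } t\to T,
\]
and Bamler's comparison between the pointed Nash entropy $\mathcal{N}_{(x,t)}(\tau)$ and such volume ratios then forces the asymptotic Nash entropy of every tangent flow based at $(x_\infty,T)$ to be strictly negative. Since the Gaussian on $\R^4$ has asymptotic entropy $0$, the tangent flow must be nonflat. The orbifold structure at each point of $\Sigma$ follows from Bamler's $\epsilon$-regularity combined with K\"{a}hler rigidity in complex dimension $2$ (the local model being $\C^2/\Gamma$ for $\Gamma\subset \U(2)$ finite), and finiteness of $\Sigma$ follows from the properness of the soliton potential $f$ on the smooth part, which confines $\Sigma$ to a compact region of the shrinker.

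I expect the main obstacle to be the third step. The subtlety is that volume collapsing is a statement at the \emph{original} scale of the flow, whereas the tangent flow is selected by a sequence of unspecified rescaling factors $\tau_i\to 0$; one must show that the Nash-entropy gap survives the rescaling and passage to the $\mathbb{F}$-limit, uniformly in the choice of $(x_i,t_i,\tau_i)$. A secondary difficulty is upgrading Bamler's codimension-$4$ singular set estimate to genuine orbifold local models, which requires the K\"{a}hler identities in complex dimension $2$ together with $L^2$-curvature bounds in order to promote a singular tangent cone $\R^4/\Gamma$ to an honest smooth $\Gamma$-quotient chart around each point of $\Sigma$.
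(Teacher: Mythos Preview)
Your outline has two genuine gaps, both at the places where the paper's argument is actually delicate.

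\textbf{Ruling out the flat model.} Your Nash-entropy argument only aims at the Gaussian on $\R^4$. But the dichotomy produced by Bamler's structure theory in this setting (the paper's \cref{thm:DichotmyOfTangentFlows}) is between a nonflat shrinker and the constant flow on a flat quotient $\C^2/\Gamma$ with $\Gamma\le \U(2)$; for nontrivial $\Gamma$ the asymptotic Nash entropy is $-\log|\Gamma|<0$, so ``strictly negative entropy'' does not exclude it. Moreover, the implication you assert, that global volume collapse forces
\[
\sup_{x\in M}(T-t)^{-2}\vol_{g(t)}\bigl(B_{g(t)}(x,\sqrt{T-t})\bigr)\to 0,
\]
is not a general fact: a round sphere shrinking to a point is volume collapsing while this scale-invariant ratio stays constant. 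The paper does not use entropy here at all. It exploits a feature specific to $M_k$: the fibers $F$ of $\pi:M_k\to\mathbb{P}^1$ satisfy $\vol_{g(t)}(F)=2(T-t)$, hence $\vol_{g_i(-1)}(F)\equiv 2$ after parabolic rescaling. If the tangent flow were $\C^2/\Gamma$, the Hausdorff limit of the fibers through a fixed regular point would be a nonempty analytic curve in $\C^2/\Gamma$, necessarily unbounded, and then Bishop-type lower area bounds on holomorphic curves force $\vol_{g_i(-1)}(F_i')$ to be arbitrarily large, contradicting the constant value $2$. This is where the Hirzebruch fibration and the cohomological computation $[F]\cdot[\omega(t)]=2(T-t)$ are genuinely used; your proposal has no substitute for this.

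\textbf{Finiteness of orbifold points.} Properness of $f$ gives compactness of sublevel sets, but you have not explained why all orbifold points lie in a fixed sublevel set; a priori they could march off to infinity along the noncompact end. The paper's argument is entirely different and topological: assuming $N$ singular points, one produces $N$ disjoint balls $B_{i,n}\subset M_k$ with $\partial B_{i,n}\cong\mathbb{S}^3/\Gamma_n$, uses Mayer--Vietoris and the bounded Betti numbers of $M_k$ to find one with $H_2(B_{i,n},\Z)=0$ and $H_1(\partial B_{i,n},\Z)\cong A\oplus A$, hence $\Gamma_n\le \SU(2)$ of type $D$ or $E$. Smooth K\"ahler convergence makes $B_{i,n}$ a minimal symplectic filling of $(\mathbb{S}^3/\Gamma_n,\xi)$, and Ohta--Ono uniqueness forces it to be the minimal resolution, which contains a $(-2)$-sphere, contradicting $H_2=0$. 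Your sketch does not supply any mechanism of this kind.
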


The paper is organized as follows: In section 2, we review the fundamentals of K\"{a}hler-Ricci flows on Hirzebruch surfaces and some key results from Bamler's theory that will be used in the proof; In section 3, we demonstrate the nonflat part of the main theorem, specifically, we show that the tangent flow is not the constant flow on the flat quotient $\C^2/\Gamma$ (cf.\cref{thm:TangentFlowsAreNotConstant}); Finally, in section 4, we establish that there are at most finitely many orbifold singularities in the nonflat K\"{a}hler-Ricci soliton (cf.\cref{thm:FiniteSingularities}), thereby completing the proof of the main theorem.

\subsection*{Acknowledgements} The author wishes to express his sincere gratitude to Zilu Ma and Max Hallgren for their helpful discussions and suggestions.

\section{Preliminaries}
\subsection{K\"{a}hler-Ricci flow on Hirzebruch surfaces}
We review some basic facts about the K\"{a}hler-Ricci flow on the Hirzebruch surfaces. The notations are the same as in \cite{SW11}. 

Let $M_k=\Proj(\mathcal{O}\oplus\mathcal{O}(k))$ be the projectivization of the rank 2 holomorphic vector bundle $\mathcal{O}\oplus\mathcal{O}(k)$ over $\mathbb{P}^1$. $M_k$ is called \textit{the $k$-th Hirzebruch surface}. 

Suppose that $D_\infty$ is the divisor on $M_k$ corresponding to the section $(1,0)$ of $\mathcal{O}\oplus\mathcal{O}(k)$. Because of the identification $M_k\cong\Proj(\mathcal{O}(-k)\oplus\mathcal{O})$, we may let $D_0$ be the divisor corresponding to the section $(0,1)$ of $\mathcal{O}(-k)\oplus\mathcal{O}$. Then $H^{1,1}(M_k,\R)$ is spanned by $[D_0]$ and $[D_\infty]$. By the following intersection relations
\[[D_\infty]\cdot[D_\infty]=k,[D_0]\cdot[D_0]=-k,[D_0]\cdot[D_\infty]=0,\]
and the Nakai-Moishezon criterion, the K\"{a}hler cone $\mathcal{C}(M_k)$ of $M_k$ is given by
\[\mathcal{C}(M_k)=\{\frac{b}{k}[D_\infty]-\frac{a}{k}[D_0]:b>a>0\}.\]
Moreover, the first Chern class $c_1(M_k)=(k+2)/k[D_\infty]-(k-2)/k[D_0]$.

Consider the K\"{a}hler-Ricci flow \cref{eq:KahlerRicciFlow} on $M_k$. Suppose $[\omega_0]=b/k[D_\infty]-a/k[D_0]$. Then the evolution of the class of the K\"{a}hler metric along the flow is known:
\begin{equation}\label{eq:EvolutionOfTheMetric}
    [\omega(t)]=[\omega_0]-tc_1(M)=\frac{b-t(k+2)}{k}[D_\infty]-\frac{a+t(k-2)}{k}[D_0].
\end{equation}
Let $T$ be the maximal existence time of the flow. We have (cf.[Theorem 3.3.1]\cite{BEG13})
\begin{equation}\label{eq:MaximalExistenceTime}
    T=\sup\{t:[\omega(t)]-tc_1(M_k)\in\mathcal{C}(M_k)\}.
\end{equation}
Moreover, the evolution of the total volume is:
\begin{align}\label{eq:EvolutionOfTheVolume}
    &\vol_{g(t)}(M_k)=\int_M\omega(t)^2=\left(\frac{b-t(k+2)}{k}[D_\infty]-\frac{a+t(k-2)}{k}[D_0]\right)^2\nonumber\\
    = & \frac{1}{k}(a+b-4t)(b-a-2kt).
\end{align}

From \cref{eq:MaximalExistenceTime} and \cref{eq:EvolutionOfTheVolume} we have the following classification:
\begin{enumerate}
    \item If $k\ge 2$, then $T=\frac{b-a}{2k}$ and $\lim_{t\to T-}\vol_{g(t)}(M_k)=0$;
    \item If $k=1$, there are two cases:
    \begin{enumerate}
        \item If $b\leq 3a$, then $T=\frac{b-a}{2k}$ and $\lim_{t\to T-}\vol_{g(t)}(M_1)=0$;
        \item If $b>3a$, then $T=a$ and $\lim_{t\to T-}\vol_{g(t)}=(b-3a)^2> 0$.
    \end{enumerate}
\end{enumerate}

In the cases (1) and (2a), the flow develops a finite time singularity and it is \textit{volume collapsing}. In the case (2b), the flow develops a finite time singularity and is \textit{volume noncollapsing}. 

If we further assume that the initial metric satisfies the Calabi symmetry condition, the flow equation can be reduced to calculations of ODEs and the Gromov-Hausdorff limits of the above flow are completely classified in \cite{SW11}. 

As for the singularity type, in the case when the flow is volume non-collapsing, it must develop a type I singularity (cf.\cite{CHM25}) and the singularity models are either constant flow on $\C^2$ or the flow on FIK shrinker (cf.\cite{FIK03}). On the contrast, the volume collapsing case is still unknown.

Therefore, in this paper, we consider the volume collapsing K\"{a}hler-Ricci flow on $M_k$ without imposing any symmetry condition on the initial metric.

\subsection{The tangent flow to K\"{a}hler-Ricci flow at the singular time}

In this subsection, we review some of the results from \cite{Bam21a},\cite{Bam21b} and \cite{Bam23} that will be used in our proofs in later sections.

Consider a Ricci flow $(M^4,g(t)),0\leq t<T<\infty$ on a compact K\"{a}hler surface $M$, which develops a finite time singularity at $t=T$. In \cite[Section 2.6]{Bam21b}, the ``singular time slice'' $M_T$ at $t=T$ is constructed as follows:
$M_T$ consists of all probability measures $\mu_t,0\leq t<T$ satisfying the conjugate heat equation and such that $\lim_{t\to T}\Var(\mu_t)=0$. We adopt the notations from \cite{Bam21b} and let $x$ be a point in $M_T$ whose associated conjugate heat kernel is $\nu_{x,T;t}:=\mu_t$.

For Ricci flow with finite time singularities, we can always blow up and get a corresponding tangent flow, as asserted in the following theorem:
\begin{theorem}[Tangent flow, \text{\cite[Theorem 2.18]{Bam21b}}]\label{thm:TangetFlow}
    For any sequence $\tau_i>0, \tau_i\searrow 0$, the sequence of metric flow pairs 
    \[(M^4,\tau_i^{-1}g(T+\tau_it),\nu_{x,T;T+\tau_it})\]
    admits a subsequence which $\mathbb{F}$ converges to a metric soliton $(\mathcal{X},\nu_{x_\infty,0;t})_{t\in(-\infty,0]}$. Moreover, the pointed Nash entropy $\mathcal{N}_{x_\infty}(t)\equiv \mathcal{N}_{x,T}(0)=W$ is a constant for all $t\leq 0$. 
    
    Moreover, there is a regular-singular decomposition of the tangent flow $\mathcal{X}=\mathcal{R}\cup\mathcal{S}$, where the singular part $\mathcal{S}$ has $*$-Minkowski codimension at least 4, such that the convergence is locally smooth Cheeger-Gromov sense on $\mathcal{R}$.
\end{theorem}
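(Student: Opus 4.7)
The plan is to combine three main ingredients from Bamler's program: the $\mathbb{F}$-compactness of Ricci flows with uniformly bounded Nash entropy, the rigidity case of the monotonicity formula for the pointed Nash entropy, and the partial regularity theory that yields the regular-singular decomposition with a codimension-four bound.

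First I would verify the $\mathbb{F}$-compactness hypothesis for the rescaled sequence $g_i(t):=\tau_i^{-1}g(T+\tau_i t)$ based at $x$. Perelman-Bamler monotonicity of the pointed Nash entropy along the conjugate heat kernel $\nu_{x,T;\cdot}$, together with its uniform boundedness as $t\to T^-$, gives uniform entropy bounds on each rescaled flow. The compactness theorem of \cite{Bam21a,Bam21b} then produces a subsequential $\mathbb{F}$-limit $(\mathcal{X},\nu_{x_\infty,0;t})_{t\leq 0}$; standard continuity of conjugate heat kernels under $\mathbb{F}$-convergence identifies the limiting family of measures as the conjugate heat kernel based at $x_\infty$.

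Second, I would identify this limit as a metric soliton via the rigidity side of the entropy monotonicity. Because $\mathcal{N}_{x,T}(t)$ is monotone and converges to the value $W$ as $t\to T^-$, for any fixed $t\leq 0$ the rescaled entropies $\mathcal{N}_{x,T}(T+\tau_i t)$ also converge to $W$. Passing to the limit under $\mathbb{F}$-convergence — which preserves Nash entropy with respect to the distinguished basepoint — yields $\mathcal{N}_{x_\infty}(t)\equiv W$ on $(-\infty,0]$. Bamler's characterization of equality in the entropy monotonicity in the metric-flow setting then forces $(\mathcal{X},\nu_{x_\infty,0;t})$ to be a metric soliton: the vanishing of $\frac{d}{dt}\mathcal{N}_{x_\infty}(t)$, together with pointwise Perelman-type inequalities, yields a self-similar structure at every regular point. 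The structural statement on the decomposition $\mathcal{X}=\mathcal{R}\cup\mathcal{S}$ with $\ast$-Minkowski codimension at least $4$, as well as local smooth Cheeger-Gromov convergence on $\mathcal{R}$, is then extracted directly from the partial regularity theorem of \cite{Bam23}, whose quantitative stratification and $\epsilon$-regularity arguments are driven by the same uniform Nash entropy bound.

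The main obstacle is the rigidity step identifying the constant-entropy limit with a metric soliton: one cannot directly differentiate the Nash entropy along a weak metric flow, so the soliton identity must be recovered through a delicate limiting argument using the equality case of Perelman's integrated monotonicity applied to the smooth approximants and a careful analysis of the asymptotic behavior of $\mathcal{L}$-geodesics based at $x_i\to x_\infty$. The $\mathbb{F}$-compactness and partial regularity inputs are deep, but for this theorem they can be treated as black boxes.
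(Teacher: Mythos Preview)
The paper does not supply its own proof of this statement: it is quoted verbatim as a preliminary result from Bamler's structure theory, with the citation \cite[Theorem 2.18]{Bam21b}, and is used as a black box throughout. There is therefore nothing in the paper to compare your proposal against.

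That said, your outline is a faithful high-level sketch of how Bamler's argument is organized --- compactness from uniform Nash entropy bounds, soliton identification via the rigidity case of entropy monotonicity, and the regular--singular decomposition with codimension-four singular set from the partial regularity theory of \cite{Bam23}. You also correctly flag the genuinely delicate point, namely that the constancy of the Nash entropy on the limit must be upgraded to a metric-soliton structure without differentiating along a possibly non-smooth metric flow. For the purposes of this paper, however, all of this is treated as background and no proof is expected.
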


Furthermore, \cite[Theorem 2.18]{Bam21b} gives the characterization on metric solitons with constant Nash entropy arising as the non-collapsed $\mathbb{F}$ limit of Ricci flows. The analogue of this result for K\"{a}hler-Ricci flow was proved in \cite[Theorem 2.8]{CHM25}. These yield the following finer structure result of the tangent flow:
\begin{theorem}[Structure of the tangent flow]\label{thm:StructureOfTangentFlow}
    Suppose $M^4$ is a compact K\"{a}hler surface, $(\mathcal{X},\nu_{x_\infty;t})$ is a tangent flow obtained in \cref{thm:TangetFlow} and $\mathcal{X}=\mathcal{R}\cup\mathcal{S}$ is the regular-singular decomposition. The following hold true:
    \begin{enumerate}
        \item $\mathcal{R}$ admits a Ricci flow space time structure: there is a vector field $\partial_{\mathfrak{t}}$ on $\mathcal{R}$ such that $\partial_{\mathfrak{t}}\mathfrak{t}=1 $ and $L_{\partial_{\mathfrak{t}}}g=-2\Ricci$; 
        \item There is a K\"{a}hler orbifold with isolated singularities $(X,g)$ and a probability measure $\mu=\frac{1}{(4\pi)^2}e^{-f}$ on $X$. Let $X=\mathcal{R}_X\cup\mathcal{S}_X$ be the regular-singular decomposition of $X$. There exists an identification map $\phi:\mathcal{X}\to X\times[-\infty,0)$ such that 
        \begin{enumerate}
            \item $(X,g,f)$ is a gradient shrinking K\"{a}hler-Ricci soliton with orbifold singularities, more precisely,
            \begin{equation}\label{eq:SolitonEquation}
                \Ricci+\nabla^2f=\frac{1}{2}g, -(R+|\nabla f|^2)+f\equiv W.
            \end{equation}
            \item $\phi_t:(\mathcal{X}_t,d_t,\nu_{x_\infty;t})\to (X,\sqrt{|t|}d_g,\mu)$ is an isometry between metric measure spaces and $\phi_t$ maps $\mathcal{R}_t$ to $\mathcal{R}_X$;
            \item On $\mathcal{R}$, we have $\phi^*\frac{\partial}{\partial t}=\partial_{\mathfrak{t}}-\nabla f$, where $\frac{\partial}{\partial t}$ is the derivative with respect to the second factor in $\mathcal{R}_X\times[-\infty,0)$;
            \item There is a family of probability measures $\nu'_{x';t}, x'\in X,t\leq 0$ such that for all $s\leq t<0,x\in\mathcal{X}_t$, $(\phi_s)_*\nu_{x;s}=\nu'_{\phi_t(x);\log(s/t)}$;
            \item We have $\mu(\mathcal{S}_X)=0$. Moreover, either (i) $R\equiv0$ and $(X,d)$ is a K\"{a}hler cone with smooth link; or (ii) $R>0$ everywhere in $\mathcal{R}_X$;
            \item The complex structure and metric converges in smooth Cheeger-Gromov sense on $\mathcal{R}$: There is a precompact open exhaustion $V_i$ of $\mathcal{R}_X\times[-\infty,0)$, and embeddings $\tilde{\psi_i}:V_i\to M_k\times[-\tau_i^{-1}T,0)$, such that $\tilde{\psi_i}^*g_i(t)$ (resp. $\tilde{\psi_i}^*J$) converges to $g(t)$ (resp. $\hat{J}$) locally smoothly.
        \end{enumerate}
    \end{enumerate}
    The metric measure space $(X,d,\mu)$ is called the model space of the metric soliton $(\mathcal{X},\nu_{x_\infty;t})$. The metric soliton is uniquely determined by its model up to flow isometry.
\end{theorem}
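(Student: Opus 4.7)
The theorem is essentially a packaged consequence of two black-box results: the metric-soliton structure theorem for non-collapsed $\mathbb{F}$-limits of Ricci flows in \cite[Theorem 2.18]{Bam21b}, together with its K\"ahler refinement \cite[Theorem 2.8]{CHM25}. Since the tangent flow produced by \cref{thm:TangetFlow} is a metric soliton with constant pointed Nash entropy $\mathcal{N}_{x_\infty}(t)\equiv W$, the hypotheses of both results are met, and the plan is to cite and stitch them together, reading off conclusions (1) and (2)(a)--(f) in order.

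First, I would invoke \cite[Theorem 2.18]{Bam21b} in real dimension four. This directly supplies: the smooth Ricci-flow spacetime structure on $\mathcal{R}$ asserted in (1); the existence of a static model metric measure space $(X,d,\mu)$ with $\mu=(4\pi)^{-2}e^{-f}\,d\mathrm{vol}$; the identification $\phi:\mathcal{X}\to X\times[-\infty,0)$; the metric-measure isometry $\phi_t:(\mathcal{X}_t,d_t,\nu_{x_\infty;t})\to(X,\sqrt{|t|}d_g,\mu)$ in (b); the decomposition $\phi^*\partial_t=\partial_{\mathfrak{t}}-\nabla f$ in (c); and the self-similarity of conjugate heat kernels in (d). The regular-singular decomposition with $\ast$-Minkowski codimension four on $\mathcal{S}$ comes from \cite{Bam21a,Bam23}; because the ambient real dimension is four, $\mathcal{S}_X$ is at most a discrete set of points in $X$, which also gives $\mu(\mathcal{S}_X)=0$.

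Second, I would apply the K\"ahler refinement \cite[Theorem 2.8]{CHM25}. It promotes $\mathcal{R}_X$ to a K\"ahler manifold whose metric extends across the isolated singularities as an orbifold K\"ahler metric, turns $f$ into a K\"ahler-Ricci potential, and yields the soliton system \cref{eq:SolitonEquation} of part (a). The same statement also produces the dichotomy in (e): either $R\equiv 0$ and $(X,d)$ is a K\"ahler cone with smooth link, or $R>0$ throughout $\mathcal{R}_X$, the strong maximum principle applied to the scalar-curvature equation $\Delta_f R=R-2|\Ricci|^2$ excluding any intermediate case. Finally, the joint smooth Cheeger-Gromov convergence of metrics and complex structures in (f) is output directly by this refinement, since the complex structures $\tilde{\psi}_i^{\ast}J$ are shown in \cite{CHM25} to converge together with the rescaled metrics $\tilde{\psi}_i^{\ast}g_i(t)$.

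The only point not purely bookkeeping is verifying that \cite[Theorem 2.8]{CHM25} is applicable here despite our ambient flow being volume-collapsing. The argument there is local near the basepoint of the tangent flow and depends only on constancy of Nash entropy on the tangent flow itself, which holds in our setting by \cref{thm:TangetFlow}, rather than on any non-collapsing of $g(t)$ on $M_k$. Once this is pointed out, the remaining work is a citation-level assembly of the two structural theorems, and the theorem follows.
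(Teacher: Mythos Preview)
Your proposal is correct and matches the paper's approach exactly: the paper does not give a standalone proof of this theorem but simply states it as a consequence of \cite[Theorem 2.18]{Bam21b} together with its K\"ahler analogue \cite[Theorem 2.8]{CHM25}, which is precisely the assembly you describe. Your write-up is in fact more detailed than the paper's, which just says ``These yield the following finer structure result'' before stating the theorem.
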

\begin{remark}\label{rmk:CheegerGromovMaps}
    We will make use of the local smooth convergence in part (e) only at time slice $t=-1$. For later use, we suppose $U_i=V_i\cap X\times\{-1\}$ be a precompact open exhaustion of $X$ and let $\psi_i:U_i\to M_k$ be the restriction of $\phi_i$ to $U_i$. Then we have $\psi_i^*g_i(-1)$ (resp. $\psi_i^*J$) converges to $g$ (resp. $J$) locally smoothly;
\end{remark}

Note that (3e) of \cref{thm:StructureOfTangentFlow} gives a classification on the possibilities of the model space $(X,d,\mu)$. Next theorem improves this classification:
\begin{theorem}\label{thm:DichotmyOfTangentFlows}
    There are two possibilities of the model space $(X,d,\mu)$ in \cref{thm:StructureOfTangentFlow}:
    \begin{enumerate}
        \item $(X,d)$ is isometric biholomorphic to $\C^2/\Gamma, \Gamma\leq U(2)$ and 
        \[\mu=\frac{1}{(4\pi)^2}e^{-\frac{1}{4}|z|^2+\log|\Gamma|}d\vol_g\] 
        In this case, the tangent flow is the constant flow on $\C^2/\Gamma$;
        \item $(X,g,f)$ is a nonflat K\"{a}hler-Ricci shrinker with isolated orbifold singularities. In this case, the tangent flow is the associated K\"{a}hler-Ricci flow to the shrinker $(X,g,f)$.
    \end{enumerate}
\end{theorem}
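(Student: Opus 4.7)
The plan is to start from the dichotomy in \cref{thm:StructureOfTangentFlow}(3e). The alternative in which $R > 0$ on $\mathcal{R}_X$ falls directly into alternative (2) of the statement: positivity of $R$ somewhere rules out the flat quotient, and the shrinker-with-isolated-orbifold-singularities structure is already part of the conclusion of \cref{thm:StructureOfTangentFlow}. The burden of the proof is therefore confined to the other alternative, where $R \equiv 0$ on $\mathcal{R}_X$ and $(X,d)$ is a K\"ahler cone with smooth link; I will show this forces the flat Gaussian model on $\C^2/\Gamma$, $\Gamma \leq U(2)$.

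First I would upgrade $R \equiv 0$ to $\Ricci \equiv 0$. On any gradient shrinker, twice-contracted Bianchi applied to \cref{eq:SolitonEquation} yields the pointwise identity
\[\Delta R - \langle \nabla f, \nabla R\rangle = R - 2|\Ricci|^2\]
on the smooth locus. Inserting $R \equiv 0$ collapses this to $|\Ricci|^2 \equiv 0$ on $\mathcal{R}_X$; since $\mathcal{S}_X$ is discrete and the soliton equations extend smoothly in orbifold charts with smooth link, continuity gives $\Ricci \equiv 0$ on all of $X$. Substituting back into \cref{eq:SolitonEquation} produces the rigidity equation $\nabla^2 f = \tfrac12 g$.

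Next I would identify $(X,g)$ via Tashiro-Kanai-type rigidity: a complete Riemannian orbifold carrying a nonconstant function with $\nabla^2 f = \tfrac{1}{2} g$ and smooth link at its singularities is isometric to $\R^4/\Gamma$ for a finite subgroup $\Gamma$ acting linearly on $\R^4$, and in the resulting Euclidean chart one has $f = \tfrac14|x|^2 + c$ for some constant $c$. Compatibility with the parallel complex structure $J$, which is preserved along the flow of $\nabla f$ since $\nabla f$ is a real holomorphic vector field for a K\"ahler-Ricci soliton, forces $\Gamma \leq U(2)$ and upgrades the identification to an isometric biholomorphism $X \cong \C^2/\Gamma$ with $f(z) = \tfrac14|z|^2 + c$.

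Finally I would compute $c$ and the density of $\mu$. The second soliton identity $-(R+|\nabla f|^2) + f = W$ with $R = 0$ and $|\nabla f|^2 = \tfrac14|z|^2 = f - c$ pins $c = W$, giving
\[\mu = \frac{1}{(4\pi)^2}\, e^{-\frac14|z|^2 - W}\, d\vol_g.\]
The normalization $\mu(X)=1$ together with $\int_{\C^2/\Gamma} e^{-|z|^2/4}\, d\vol_g = (4\pi)^2/|\Gamma|$ forces $W = -\log|\Gamma|$, matching the stated formula. The main obstacle I anticipate is making the two rigidity steps fully rigorous at the orbifold points: the shrinker identity used to pass from $R \equiv 0$ to $\Ricci \equiv 0$ and the Tashiro-type Hessian rigidity both need to be propagated across the isolated singular set. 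The smooth-link hypothesis for the cone, together with elliptic regularity in orbifold charts and the fact that a strictly convex potential on a cone extends uniquely to its apex, should suffice, but requires careful local setup.
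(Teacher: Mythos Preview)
Your proposal is correct but takes a genuinely different route from the paper. The paper exploits the cone structure in \cref{thm:StructureOfTangentFlow}(3e)(i) head-on: writing $g=dr^2+r^2h$ on $X=C(N)$, it computes $\Ricci$ and $\nabla^2 f$ explicitly in cone coordinates and plugs into the soliton equation \cref{eq:SolitonEquation} to obtain a system in $r$ and the link variables. Solving this system term-by-term yields $f=\tfrac14 r^2 + f_1(x)r + f_0(x)$, then $f_0\equiv c$, then $\Ricci^N=2h$ so $N$ is a $3$-dimensional space form; the K\"ahler condition then forces $X=\C^2/\Gamma$ with $\Gamma\leq U(2)$, and the auxiliary identity in \cref{eq:SolitonEquation} kills $f_1$. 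You instead bypass the cone coordinates entirely: from $R\equiv 0$ you use the shrinker evolution identity $\Delta_f R = R-2|\Ricci|^2$ to get $\Ricci\equiv 0$, reduce to $\nabla^2 f=\tfrac12 g$, and then invoke Tashiro--Kanai Hessian rigidity to identify $X$ with a flat quotient.

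The trade-off is clear. The paper's argument is fully elementary and self-contained---it needs nothing beyond the cone structure already supplied by \cref{thm:StructureOfTangentFlow}(3e)(i) and undergraduate Riemannian computations, and in particular the passage through the orbifold point is a non-issue because the apex is simply the tip of a cone over a smooth space form. Your argument is more conceptual and would work even without being handed the cone structure in advance, but the cost is exactly the obstacle you flag: you must justify the orbifold version of Tashiro's rigidity, together with the extension of the drift-Laplacian identity across the isolated singularities. These are true, but citing or proving them is extra overhead that the paper's direct computation avoids.
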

\begin{proof}
    It suffices to show that the first case in (3e) of \cref{thm:StructureOfTangentFlow} implies (1). Firstly, suppose that $X=C(N)$, which is the cone over a smooth 3 manifold $M$ with metric $h$. Suppose $o$ is the cone point of $X$. We may identify $X\setminus\{o\}$ with $M\times(0,+\infty)$ and let $g=dr^2+r^2h$ be the conic metric on $X$, where $r$ is the parameter corresponding to the second factor. Direct computation shows that 
    \[\Ricci=\Ricci^M-2h, \nabla^2f=\frac{\partial^2 f}{\partial r^2}dr^2+r\frac{\partial f}{\partial r}h+(\nabla^M)^2f+\left(\frac{\partial^2f}{\partial r\partial x_i}-\frac{1}{r}\frac{\partial f}{\partial x_i}\right)dx^i\otimes_S dr.\]
    where $\Ricci^M$ is the Ricci curvature on $M$, $x_i$ are local coordinates on $M$ and $\otimes_S$ is the symmetric product. Substitute into the soliton equation
    \[\Ricci+\nabla^2f=\frac{1}{2}g\]
    and we get 
    \begin{equation}
        \begin{cases}\label{eq:SystemOfEquations}
            &\frac{\partial^2f}{\partial r^2}=\frac{1}{2}\\
            &\frac{1}{2}r^2h=\Ricci^M-2h+r\frac{\partial f}{\partial r}h+(\nabla^M)^2f\\
            &\frac{\partial^2f}{\partial r\partial x_i}-\frac{1}{r}\frac{\partial f}{\partial x_i}=0
        \end{cases}
    \end{equation}
    The first equation implies $f(x,t)=\frac{1}{4}r^2+f_1(x)r+f_0(x)$, where $f_0,f_1$ are functions on $M$. Combine with the third equation and we get $\frac{\partial f_0}{\partial x_i}=0$. Hence $f_0(x)\equiv c$ is a constant function. Plug this into the second equation and it yields
    \begin{equation}\label{eq:EquationOnM}
        \Ricci^M=2h,f_1(x)h+(\nabla^M)^2f_1(x)=0.
    \end{equation}
    The first equation above shows that $M$ is a 3 dimensional Einstein manifold with Einstein constant $2$ and hence a spherical space form. Since $X$ is K\"{a}hler, this implies that $X=\C^2/\Gamma, \Gamma\leq U(2)$. Moreover, the second equation in \cref{eq:SolitonEquation} implies that $f-|\nabla f|^2$ is a constant. It follows that $f_1(x)=c'$. Finally, by the second equation in \cref{eq:EquationOnM} we deduce that $f_1(x)=c'=0$. Therefore, $f=\frac{1}{4}r^2+c$ and $\mu=\frac{1}{(4\pi)^2}e^{-\frac{1}{4}|z|^2-c}$. Because $\mu$ is a probability measure, $c=-\log|\Gamma|$. This finishes the proof.
\end{proof}

\section{Tangent flows are not constant flows}

The main theorem of this section is 
\begin{theorem}\label{thm:TangentFlowsAreNotConstant}
    For the volume collapsing K\"{a}hler-Ricci flow $g(t), 0\leq t<T$ on $M_k$, the tangent flow at any point $x\in (M_k)_T$ is not the constant flow. Equivalently, the case (1) in \cref{thm:DichotmyOfTangentFlows} cannot occur.
\end{theorem}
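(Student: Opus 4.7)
The plan is to argue by contradiction, exploiting the $\mathbb{P}^1$-fibration $\pi: M_k \to \mathbb{P}^1$ to produce a family of compact holomorphic curves whose rescaled areas are incompatible with smooth convergence to a flat model. First, I would carry out the cohomological computation. Using $[D_\infty]\cdot[F] = [D_0]\cdot[F] = 1$ for the fiber class $[F] = \frac{1}{k}([D_\infty] - [D_0])$, combined with \cref{eq:EvolutionOfTheMetric}, one finds
\[
[\omega(t)]\cdot[F] = \frac{b - a - 2kt}{k},
\]
which vanishes precisely at $t = T = (b-a)/(2k)$. Hence at time $t = T - \tau_i$, each smooth $\pi$-fiber has $g(T-\tau_i)$-area equal to $2\tau_i$, so in the rescaled metric $\tilde g_i := \tau_i^{-1} g(T-\tau_i)$ every fiber has area exactly $2$.

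Now assume for contradiction that the tangent flow at $x$ is the constant flow on $X = \mathbb{C}^2/\Gamma$ with origin $o$. By \cref{thm:StructureOfTangentFlow}(3)(e) and \cref{rmk:CheegerGromovMaps}, there are maps $\psi_i: U_i \to M_k$ from a precompact open exhaustion $U_i$ of $\mathcal{R}_X = X \setminus \{o\}$ such that $\psi_i^*\tilde g_i$ converges to the flat metric $g_{\mathrm{flat}}$ locally smoothly. Since $\mathcal{R}_X$ is an unbounded flat orbifold and the $U_i$ exhaust it, I can pick base points $y_i \in U_i$ and radii $R_i \to \infty$ with $\overline{B_X(y_i, R_i)} \subset U_i$. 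Setting $x_i := \psi_i(y_i) \in M_k$, the locally smooth convergence then gives
\[
\sup_{B_{\tilde g_i}(x_i, R_i)} |\Riemann|_{\tilde g_i} \longrightarrow 0 \qquad (i \to \infty).
\]

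Let $F_i \subset M_k$ denote the smooth $\mathbb{P}^1$-fiber of $\pi$ through $x_i$. It is a holomorphic curve, hence a minimal surface in the K\"ahler manifold $(M_k, \tilde g_i)$, passing through the smooth point $x_i$ with density one. The monotonicity formula for minimal surfaces in a ball of small curvature then yields
\[
\mathrm{Area}_{\tilde g_i}\bigl(F_i \cap B_{\tilde g_i}(x_i, R_i)\bigr) \ge (1 - o(1))\,\pi R_i^2.
\]
Since $R_i \to \infty$, the right-hand side tends to infinity, contradicting the global bound $\mathrm{Area}_{\tilde g_i}(F_i) = 2$ obtained in the cohomological computation.

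The main obstacle I anticipate is the selection of the pairs $(y_i, R_i)$ in the second step: one must move $y_i$ far from the orbifold tip $o \in X$ (and hence far from the pre-limit basepoint in $M_k$) in order to have $R_i \to \infty$ while the ball still lies in the regular exhaustion $U_i$. This is possible precisely because $\mathcal{R}_X$ is an unbounded orbifold and the $U_i$ contain every compact subset of it; the locally uniform $C^\infty$ convergence then transfers the curvature decay to the corresponding balls in $M_k$, legitimizing the appeal to monotonicity.
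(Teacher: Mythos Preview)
Your argument is correct, and its opening move coincides with the paper's (the fiber-area computation is the paper's \cref{lem:VolumeOfTheFiber} and \cref{cor:VolumeIsConstant}), but the mechanism for forcing large fiber area is genuinely different. The paper fixes a regular point $x_0\in X\setminus\{o\}$, extracts a Hausdorff sublimit $F\subset X\setminus\{o\}$ of the pulled-back fibers $\psi_i^{-1}(F_i')$, uses H\"ormander's $L^2$-method to build $J_i$-holomorphic charts converging to standard ones, and invokes Bishop's theorem to conclude that $F$ is an analytic curve; since analytic curves in $\C^2$ are unbounded, one then places $N$ disjoint small balls along $F$ and applies Bishop's area lower bound in each, yielding $\vol_{\tilde g_i}(F_i')\ge cNr^2$. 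You instead push the base point $y_i$ toward infinity in the flat cone and apply the monotonicity formula for minimal surfaces directly to the calibrated fiber through $x_i=\psi_i(y_i)$ in a single large almost-Euclidean ball. Your route is more elementary---no $\overline{\partial}$-estimates, no Hausdorff limit, no Bishop compactness for analytic sets---and it exploits only the unboundedness of the ambient cone $X$ rather than of a limit curve inside it; the paper's approach, while heavier, has the payoff of producing an honest analytic limit object in $X$. One simplification worth noting: sending $R_i\to\infty$ obliges you to run a diagonal argument to secure curvature decay on moving compacta; a fixed radius $R$ with $\pi R^2>4$, together with a fixed $y_0$ satisfying $d_X(y_0,o)\gg R$ relative to $|\Gamma|$ (so that $B_X(y_0,R)$ is a genuine Euclidean $R$-ball, supplying the injectivity-radius hypothesis needed for monotonicity), already delivers the contradiction once $i$ is large.
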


As a corollary of \cref{thm:TangentFlowsAreNotConstant} and the classification \cref{thm:DichotmyOfTangentFlows}, we deduce the following description of the tangent flow:
\begin{corollary}\label{cor:TangentFlowsAreNonflatShrinkers}
    Under the assumptions of \cref{thm:TangentFlowsAreNotConstant}, any tangent flow is the associated K\"{a}hler-Ricci flow of a \textit{nonflat} gradient K\"{a}hler-Ricci shrinker $(X,g,f)$ with isolated orbifold singularities. 
\end{corollary}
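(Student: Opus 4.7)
The plan is to argue by contradiction: assume the tangent flow at $x\in(M_k)_T$ is the constant flow on $\mathbb{C}^2/\Gamma$ from case (1) of \cref{thm:DichotmyOfTangentFlows}, and extract from the $\mathbb{P}^1$-bundle structure of $M_k$ a compact positive-dimensional holomorphic cycle inside $\mathbb{C}^2/\Gamma$, contradicting the fact that this (orbifold) variety is Stein.

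The key cohomological computation is that all the fibers have the same calibrated rescaled area. Let $\pi:M_k\to\mathbb{P}^1$ be the projection and $[F]$ its fiber class, so $[F]\cdot[D_\infty]=[F]\cdot[D_0]=1$. The volume-collapsing condition forces $T=(b-a)/(2k)$, so by \eqref{eq:EvolutionOfTheMetric},
\[
[F]\cdot[\omega(T-\tau)] \;=\; \frac{b-a-2k(T-\tau)}{k} \;=\; 2\tau.
\]
Hence every fiber $F\subset M_k$ has area exactly $2$ in the rescaled K\"ahler form $\omega_i(-1)=\tau_i^{-1}\omega(T-\tau_i)$, independently of $i$ and of $F$.

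By \cref{rmk:CheegerGromovMaps} there are precompact exhaustions $U_i$ of the regular part $\mathcal{R}_X\subset\mathbb{C}^2/\Gamma$ together with embeddings $\psi_i:U_i\to M_k$ with $\psi_i^* g_i(-1)\to g_{\mathrm{flat}}$ and $\psi_i^* J\to J_0$ locally smoothly. Choose $o_i\in U_i$ with $o_i\to o$ in the flat metric, set $x_i:=\psi_i(o_i)$, and let $F_i=\pi^{-1}(\pi(x_i))$ be the fiber through $x_i$. The $J$-holomorphic monotonicity formula applied at the smooth point $x_i\in M_k$ within $g_i$-balls on which the curvature of $g_i$ is $o(1)$ (which, by local smooth convergence, includes any fixed $g_i$-ball about $x_i$ once $i$ is large) yields
\[
\mathrm{area}_{g_i}(F_i\cap B^{g_i}_r(x_i)) \;\geq\; (1-o(1))\,\pi r^2
\]
for $r$ up to a threshold $r_0(i)\to\infty$. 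Combined with $\mathrm{area}_{g_i}(F_i)=2$, this forces $F_i\subset B^{g_i}_{\sqrt{2/\pi}+o(1)}(x_i)\subset \psi_i(U_i)\cup\{x_i\}$ for $i$ large, since otherwise a slightly larger $r$ would give area strictly greater than $2$.

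Now transport $F_i$ back via $\psi_i^{-1}$ to obtain $\psi_i^* J$-holomorphic parametrizations $\mathbb{P}^1\to\mathbb{C}^2/\Gamma$ of symplectic area $2$ with uniformly bounded image. By Gromov's compactness theorem for $J$-holomorphic maps into converging almost-K\"ahler targets (applied after a local $|\Gamma|$-sheeted uniformization near $o$ to handle the possible orbifold point), a subsequence converges to a stable $J_0$-holomorphic bubble tree of total symplectic area $2>0$ in $\mathbb{C}^2/\Gamma$. But $\mathbb{C}^2/\Gamma$ is Stein---being the quotient of the affine $\mathbb{C}^2$ by a finite subgroup of $U(2)$---so every holomorphic map from a compact Riemann surface into it is constant; each bubble contributes zero area, giving total area $0$, the desired contradiction. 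The main technical obstacle is establishing the uniform diameter bound on $F_i$ via monotonicity valid up to a radius exceeding $\sqrt{2/\pi}$; this requires controlling $|\Riemann|_{g_i}$ on fixed-radius $g_i$-balls around $x_i$, which follows from the local smooth convergence of $\psi_i^* g_i$ to $g_{\mathrm{flat}}$ on $\mathcal{R}_X$ together with the fact that $U_i$ eventually contains any prescribed compact subset of $\mathcal{R}_X$.
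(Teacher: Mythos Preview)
Your overall strategy is dual to the paper's and is reasonable: both begin with the cohomological computation that every fiber has $\omega_i(-1)$--area exactly $2$, and both exploit that $\C^2/\Gamma$ is Stein. The paper pushes the fibers through the Cheeger--Gromov charts, shows (via Bishop's theorem) that their Hausdorff limit in $X\setminus\{o\}$ is an \emph{unbounded} analytic curve, and then places $N$ disjoint coordinate balls along it to force $\vol_{g_i(-1)}(F_i)\geq cNr^2$, contradicting the area bound. You attempt the converse direction: use monotonicity to confine $F_i$ to a bounded $g_i$--ball, then invoke Gromov compactness to produce a compact holomorphic curve in $\C^2/\Gamma$.

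There is, however, a genuine gap in your execution. You take $o_i\to o$, but when $\Gamma\neq\{1\}$ the point $o$ is the orbifold singularity and the smooth Cheeger--Gromov convergence of \cref{rmk:CheegerGromovMaps} holds only on compacta of $\mathcal{R}_X=X\setminus\{o\}$. For any fixed $r>0$, once $d_g(o_i,o)<r$ the ball $B_g(o_i,r)$ contains $o$, so curvature control on $B^{g_i}_r(x_i)$ does \emph{not} follow from local smooth convergence; your monotonicity radius $r_0(i)$ then tends to $0$, not $\infty$. The claimed inclusion $B^{g_i}_{\sqrt{2/\pi}+o(1)}(x_i)\subset\psi_i(U_i)\cup\{x_i\}$ is equally unjustified: the complement $M_k\setminus\psi_i(U_i)$ is exactly the region where curvature may concentrate, and nothing prevents it from intersecting this intrinsic $g_i$--ball. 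Choosing instead a \emph{fixed} basepoint $p\in\mathcal{R}_X$ with $d_g(p,o)>\sqrt{2/\pi}$ would salvage the monotonicity step, but you would still owe an argument that the intrinsic ball $B^{g_i}_r(\psi_i(p))\subset M_k$ lies inside $\psi_i(U_i)$ (i.e.\ that no short geodesic in $M_k$ escapes through the degenerating region); this does not follow formally from smooth convergence on $\mathcal{R}_X$ alone. The paper's argument sidesteps all of this: it never needs $F_i$ to be contained in $\psi_i(U_i)$, only a lower bound on the area of the portion that is.
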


To prove the above theorem, we adopt the notations from \cref{thm:TangetFlow} and let $(\mathcal{X},\nu_{x_\infty;t})$ be a tangent flow, which is the $\mathbb{F}$ limit of a blow up sequence $(M,\tau_i^{-1}g(T+\tau_it),\nu_{x,T;T+\tau_it})$. For simplicity, set
\[g_i(t)=\tau_i^{-1}g(T+\tau_it), \nu^i_{x,0;t}=\nu_{x,T;T+\tau_it}.\]

Let $\pi:M_k\to \mathbb{P}^1$ be the projection induced from $\mathcal{O}\oplus\mathcal{O}(k)\to\mathbb{P}^1$. Let $F=\pi^{-1}(y),y\in\mathbb{P}^1$ be any fiber of the projection. The lemma below calculates the evolution of the fiber volume.
\begin{lemma}\label{lem:VolumeOfTheFiber}
    We have $\vol_{g(t)}(F)=2(T-t)$ for $0\leq t<T$.
\end{lemma}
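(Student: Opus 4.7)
The plan is to reduce the fiberwise volume computation to an intersection-theoretic calculation, exploiting the fact that $F$ is a holomorphic curve and so $\vol_{g(t)}(F) = \int_F \omega(t)$ depends only on the cohomology class $[\omega(t)] \in H^{1,1}(M_k, \R)$ paired with the homology class $[F]$. This converts the lemma into a purely topological statement that will follow by combining the intersection numbers of $F$ with the generators $[D_0], [D_\infty]$ and the evolution formula \cref{eq:EvolutionOfTheMetric}.

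The first step is to record the two intersection numbers
\[
[D_\infty] \cdot [F] = 1, \qquad [D_0] \cdot [F] = 1.
\]
These are immediate because $D_\infty$ (coming from the section $(1,0)$ of $\mathcal{O} \oplus \mathcal{O}(k)$) and $D_0$ (coming from the section $(0,1)$ of $\mathcal{O}(-k) \oplus \mathcal{O}$) are both holomorphic sections of the $\mathbb{P}^1$-bundle $\pi \colon M_k \to \mathbb{P}^1$, and hence each meets a generic fiber transversely in a single point.

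Substituting these into \cref{eq:EvolutionOfTheMetric} gives
\[
\vol_{g(t)}(F) = [\omega(t)] \cdot [F] = \frac{b - t(k+2)}{k} - \frac{a + t(k-2)}{k} = \frac{b - a - 2kt}{k}.
\]
The final step is to identify this expression with $2(T-t)$ using the classification of the maximal existence time in the volume collapsing regime: in both case (1) ($k \geq 2$) and case (2a) ($k=1$, $b \leq 3a$) one has $T = (b-a)/(2k)$, so $b-a = 2kT$ and the right-hand side above becomes $\frac{2k(T-t)}{k} = 2(T-t)$.

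There is no serious obstacle here; the only thing to check carefully is that $D_0$ and $D_\infty$ really are both sections of $\pi$ (so that both intersection numbers with $[F]$ equal $1$), and that the formula $T = (b-a)/(2k)$ applies in the volume collapsing setting assumed throughout this section. Both points are recorded in the preliminaries.
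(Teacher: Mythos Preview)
Your proof is correct and follows essentially the same route as the paper: both reduce $\vol_{g(t)}(F)=\int_F\omega(t)$ to the intersection number $[\omega(t)]\cdot[F]$, evaluate it using \cref{eq:EvolutionOfTheMetric}, and then invoke $T=(b-a)/(2k)$. The only cosmetic difference is that the paper expresses $[F]$ in the basis $\{[D_\infty],[D_0]\}$ and uses the self-intersection numbers, whereas you directly use $[D_\infty]\cdot[F]=[D_0]\cdot[F]=1$; the resulting arithmetic is identical.
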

\begin{proof}
    Note that the class of the fiber $F$ is $[F]=[D_\infty]-[D_0]$. By the evolution of the class of the metric \cref{eq:EvolutionOfTheMetric},
    \begin{align*}
        &\vol_{g(t)}(F)=\int_F\omega(t)=[\omega(t)]\cdot[F]\\
        = & \left(\frac{b-t(k+2)}{k}[D_\infty]-\frac{a+t(k-2)}{k}[D_0]\right)\cdot\left([D_\infty]-[D_0]\right)\\
        = & 2\left(\frac{b-a}{2k}-t\right).
    \end{align*}
    Since $T=\frac{b-a}{2k}$, we are done.
\end{proof}

It follows immediately that for any flow $(M_k,g_i(t),\nu^i_{x,0;t})$ in the blow up sequence, the volume of the fiber $F$ is a fixed constant at time $t=-1$.

\begin{corollary}\label{cor:VolumeIsConstant}
    Under the flow $(M_i,g_i(t),\nu^i_{x,0;t})$, the following holds
    \[\vol_{g_i(-1)}(F)\equiv 2.\]
\end{corollary}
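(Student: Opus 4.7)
The plan is to simply unwind the rescaling definition and apply \cref{lem:VolumeOfTheFiber}. First I would note that evaluating the rescaled flow $g_i(t) = \tau_i^{-1} g(T + \tau_i t)$ at $t = -1$ corresponds to the original metric $g$ at time $T - \tau_i$, scaled by $\tau_i^{-1}$. Since the fiber $F \cong \mathbb{P}^1$ is a real $2$-dimensional submanifold, its volume transforms under the conformal rescaling $g \mapsto \tau_i^{-1} g$ by the factor $\tau_i^{-1}$ (one factor for each real dimension, taking a square root).

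Combining these two observations, I would write
\begin{equation*}
\vol_{g_i(-1)}(F) = \tau_i^{-1} \vol_{g(T-\tau_i)}(F).
\end{equation*}
Then I would invoke \cref{lem:VolumeOfTheFiber} with $t = T - \tau_i$, which gives $\vol_{g(T-\tau_i)}(F) = 2(T-(T-\tau_i)) = 2\tau_i$. Substituting yields $\vol_{g_i(-1)}(F) = \tau_i^{-1} \cdot 2\tau_i = 2$, independent of $i$.

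There is no real obstacle here; the statement is a bookkeeping consequence of \cref{lem:VolumeOfTheFiber} together with the way the parabolic rescaling $\tau_i^{-1} g(T + \tau_i t)$ is set up. The only point to be slightly careful about is that we are measuring the volume of $F$ as a submanifold in the induced $2$-dimensional Riemannian volume, so the rescaling factor is $\tau_i^{-1}$ rather than $\tau_i^{-2}$ (which would be the case for the ambient $4$-dimensional volume). This is precisely why the choice $t = -1$ produces the clean constant $2$: it is exactly designed so that the fiber volume is preserved in the limit $i \to \infty$, which is what one needs in the ensuing argument.
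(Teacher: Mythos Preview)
Your proof is correct and matches the paper's approach: the paper states the corollary as an immediate consequence of \cref{lem:VolumeOfTheFiber} without writing out the details, and your computation is exactly the intended unwinding of the parabolic rescaling together with the $2$-dimensional volume scaling.
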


We are now ready to prove \cref{thm:TangentFlowsAreNotConstant}.

\begin{proof}[Proof of \cref{thm:TangentFlowsAreNotConstant}]
    We prove by contradiction. Assume that $(\mathcal{X},\nu_{x_\infty;t})$ is the constant flow on $X=\C^2/\Gamma$.

    Fix a point $x_0\in X\setminus\{o\}$ and consider the Cheeger-Gromov embeddings $\psi_i$ constructed in \cref{rmk:CheegerGromovMaps}. Let $F_i'=\pi^{-1}(\psi_i(x_0))\subseteq M_k$ be the fiber of $\pi$ containing $\psi_i(x_0)$ and $F_i=\psi^{-1}(F_i)$ be its preimage in $U_i$. By \cite[Theorem 7.3.8]{BBI01} and a diagonal argument, after passing to a subsequence, $F_i$ converges to a locally closed subset $F$ of $X\setminus\{o\}$ locally in the Hausdorff distance. Since $x\in F_i$, we know that $x\in F$. In particular, $F\ne\emptyset$.

    \begin{claim}\label{claim:AnalyticityOfF}
        $F$ is an analytic curve in $X\setminus\{o\}$ and hence its closure $\Bar{F}$ is an analytic curve in $X$.
    \end{claim}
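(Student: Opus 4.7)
The plan is to exploit that each $F_i$ is a holomorphic curve with respect to the pulled-back complex structure $\psi_i^* J$, combined with a uniform area bound inherited from \cref{cor:VolumeIsConstant}, in order to extract an analytic limit by a compactness theorem for analytic subsets with varying complex structures; the extension of $\bar F$ across the orbifold point $o$ will then follow from Remmert--Stein.

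First I would set up the uniform area bound. Since $F_i' \subset M_k$ is a holomorphic curve (a fiber of $\pi$) and $\psi_i$ is a diffeomorphism onto its image, $F_i = \psi_i^{-1}(F_i' \cap \psi_i(U_i))$ is a smooth real surface in $U_i$ that is holomorphic with respect to $\psi_i^* J$. By \cref{cor:VolumeIsConstant}, $\vol_{\psi_i^* g_i(-1)}(F_i) \leq \vol_{g_i(-1)}(F_i') = 2$, and the smooth convergence $\psi_i^* g_i(-1) \to g$ on compacts (\cref{rmk:CheegerGromovMaps}) yields a uniform bound $\vol_g(F_i \cap K) \leq 2 + o(1)$ for every compact $K \Subset X \setminus \{o\}$. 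Simultaneously $\psi_i^* J \to J$ smoothly on compacts, so the $F_i$ are ``almost $J$-holomorphic'' with errors tending to zero in $C^\infty_{\mathrm{loc}}$.

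Next I would pass to the limit. The integration currents $[F_i]$ are positive $(1,1)$-currents with respect to $\psi_i^* J$ of uniformly bounded mass on compacts of $X \setminus \{o\}$, so a subsequence converges weakly to a positive $(1,1)$-current $T$ with respect to $J$, whose support agrees with the Hausdorff limit $F$. To recognize $F$ as analytic, I would work locally: on each small ball in $X \setminus \{o\}$, Newlander--Nirenberg furnishes $\psi_i^* J$-holomorphic coordinate systems that converge smoothly to $J$-holomorphic ones, and in such charts each $F_i$ is the zero set of a holomorphic function $h_i$; after suitable normalization the $h_i$ subconverge to a $J$-holomorphic function cutting out $F$, showing $F$ is a pure $1$-dimensional $J$-analytic subset. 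Diagonalizing over a compact exhaustion of $X \setminus \{o\}$ then makes $F$ globally analytic. For the closure, the area bound persists in punctured neighborhoods of $o$, and since $\{o\}$ has complex codimension $2 > 1 = \dim_{\C} F$, Remmert--Stein (applied after lifting to the smooth cover $\C^2 \to \C^2/\Gamma$, or via its orbifold version) implies $\bar F$ is an analytic curve in $X$.

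I expect the principal obstacle to be the middle step, namely promoting the Hausdorff/current limit of submanifolds that are holomorphic only with respect to the approximating structures $\psi_i^* J$ into a genuine $J$-analytic subset. The cleanest solution is to invoke a perturbative version of Bishop's theorem on the convergence of analytic sets, which applies whenever the ambient complex structures converge smoothly and volumes stay bounded. An alternative is to realize each $F_i$ as the image of a proper $\psi_i^* J$-holomorphic map $\mathbb{P}^1 \to M_k$ (coming from the rational fiber $F_i'$) and invoke Gromov compactness for pseudoholomorphic maps into targets with smoothly converging almost complex structures; in the integrable limit the resulting stable map produces an honest complex analytic curve whose image is the claimed $F$.
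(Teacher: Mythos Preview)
Your proposal is correct and follows essentially the same route as the paper: both straighten out the converging complex structures $\psi_i^*J \to \hat J$ in local charts and then invoke Bishop's theorem on Hausdorff limits of analytic sets to conclude that $F$ is analytic in $X\setminus\{o\}$. The paper is slightly more specific in that it produces the $J_i$-holomorphic coordinates via H\"ormander's $L^2$ method (citing \cite[Lemma~A.1]{CHM25}) rather than an abstract Newlander--Nirenberg statement, and it leaves the area bound and the Remmert--Stein extension implicit, but these are precisely the ingredients you have identified.
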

    This was proved in \cite[Lemma 3.1]{CHM25}. However, since we will use the constructions of coordinates in the proof later again, we give the proof of this claim here. 
    
    Let $g_0,J_0$ be the Euclidean metric and the standard complex structure on $\C^2$. For $r$ small enough, we may choose a holomorphic coordinate chart $\varphi:B_{g_0}(0,2r)\to X\setminus\{o\}$ so that 
    \begin{enumerate}
        \item $\varphi(0)=x_0$;
        \item $\varphi^*g=g_0$;
        \item $\varphi^*J_0=\hat{J}$.
    \end{enumerate}
    Since the analyticity is a local condition, we just need to show $\varphi^{-1}(F)$ is an analytic curve.
    
    Note that the second equality holds because $g$ is a flat metric. By the local smooth convergence (cf.\cref{rmk:CheegerGromovMaps}), $\varphi^*\psi_i^*g_i(-1)$ (resp. $\varphi^*\psi_i^*J$) converges smoothly to $g_0$ (resp. $J_0$). For simplicity, set $J_i:=\varphi^*\psi_i^*J$. An application of H\"{o}rmander's $L^2$ method (cf. \cite[Lemma A.1]{CHM25}) yields the $J_i$ holomorphic coordinates $\eta_i:B_{g_0}(0,r)\to B_{g_0}(0,2r)$ such that $\eta_i$ converges smoothly to the inclusion $B_{g_0}(0,r)\hookrightarrow B_{g_0}(0,2r)$. By construction we know that $\varphi^{-1}(F_i)$ is $J_i$ holomorphic in $B_{g_0}(0,2r)$ and hence $(\varphi\circ\eta_i)^{-1}(F_i)$ is $J_0$ holomorphic in $B_{g_0}(0,r)$. Since $\varphi$ is an isometry, $\varphi^{-1}(F_i)$ converges to $\varphi^{-1}(F)$ in Hausdorff distance. Furthermore, since $\eta_i$ converges to the inclusion smoothly, we deduce that $(\varphi\circ\eta_i)^{-1}(F_i)$ converges to $\varphi^{-1}(F)$ in $B_{g_0}(0,r)$. By the proof of \cite[Theorem 1]{Bis64}, $\varphi^{-1}(F)$ is an analytic curve. This finishes the proof of \cref{claim:AnalyticityOfF}.

    \begin{claim}\label{claim:VolumeIsLarge}
        For any positive integer $N$ and small $r>0$, there exists an $i$ such that 
        \[\vol_{g_i(-1)}(F_i')\ge cNr^2\]
        where $c$ is some universal constant.
    \end{claim}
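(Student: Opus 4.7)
The approach is to leverage the unboundedness of the analytic curve $\bar F\subset X$: the Stein property of $X=\C^2/\Gamma$ forces $\bar F$ to have no compact irreducible component; one then plants $N$ pairwise disjoint $r$-balls along $\bar F$ and harvests at least $\pi r^2$ of area from each via Lelong--Bishop monotonicity; finally one transfers the aggregate bound back to $F_i'$ through the Cheeger--Gromov maps $\psi_i$.

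In more detail, I would first observe that $X=\C^2/\Gamma$ with $\Gamma\le U(2)$ finite is an affine algebraic variety (its coordinate ring is the ring of $\Gamma$-invariants on $\C^2$), hence Stein. Since $\bar F\subset X$ is a closed one-dimensional analytic subvariety by \cref{claim:AnalyticityOfF}, and Stein spaces admit no compact positive-dimensional analytic subvarieties, every irreducible component of $\bar F$ must be unbounded in $X$. In particular, one may find points of $\bar F\setminus\{o\}$ at arbitrarily large distance from $o$.

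Given $N\in\Z_{>0}$ and small $r>0$, I would then inductively choose regular points $p_1,\ldots,p_N$ of $\bar F$ on a single unbounded component, each lying in $\bar F\setminus\{o\}$ with $d_X(p_j,o)>4jr$ (using unboundedness; regularity is achieved by a tiny perturbation since the singular locus of $\bar F$ is discrete). Then the balls $B_X(p_j,r)$ are pairwise disjoint and contained in the flat smooth locus $X\setminus\{o\}$. The Lelong--Bishop monotonicity formula, applied to $\bar F$ in the flat ambient metric, gives
\[\mathrm{area}_g(\bar F\cap B_X(p_j,r))\ge\pi r^2,\]
and setting $K:=\bigsqcup_{j=1}^N B_X(p_j,r)$ yields $\mathrm{area}_g(\bar F\cap K)\ge N\pi r^2$.

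To transfer this lower bound to $F_i'$, I would use that $K$ is a fixed compact subset of $X$, so $K\subset U_i$ for all $i$ large. By \cref{rmk:CheegerGromovMaps} together with the H\"ormander $L^2$ approximation already deployed in the proof of \cref{claim:AnalyticityOfF}, the $\psi_i^*J$-holomorphic curves $F_i$ converge to the analytic curve $\bar F$ in a sense strong enough that the areas themselves converge: $\mathrm{area}_{\psi_i^*g_i(-1)}(F_i\cap K)\to\mathrm{area}_g(\bar F\cap K)$. Since $\psi_i(F_i\cap K)\subseteq F_i'$ as subsets of $M_k$, one concludes that for all sufficiently large $i$,
\[\vol_{g_i(-1)}(F_i')\ge\mathrm{area}_{g_i(-1)}(F_i'\cap\psi_i(K))=\mathrm{area}_{\psi_i^*g_i(-1)}(F_i\cap K)\ge\tfrac{1}{2}N\pi r^2,\]
which is the claim with $c=\pi/2$.

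The main obstacle is upgrading the Hausdorff convergence $F_i\cap K\to\bar F\cap K$ to genuine convergence of areas; this is precisely what the H\"ormander approximation scheme in \cref{claim:AnalyticityOfF} provides, together with standard compactness results for positive analytic currents, so it is essentially already available. A minor technical point is guaranteeing that each $p_j$ avoids both $o$ and the discrete singular locus of $\bar F$, but this is immediate from the unboundedness of $\bar F$ together with a dimension count.
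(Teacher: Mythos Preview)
Your strategy matches the paper's: use unboundedness of the limit curve to plant $N$ disjoint balls and extract $\sim r^2$ of area from each via a Lelong--Bishop bound. The execution differs in one place, and the paper's version is cleaner. You apply Lelong--Bishop to the \emph{limit} curve $\bar F$ and then try to push the bound back to $F_i'$ through ``convergence of areas''. The paper instead applies Bishop's lower bound directly to the \emph{approximating} curves: by Hausdorff convergence each $F_i$ eventually enters the ball around $x_\alpha$, and after the $\eta_i^\alpha$-straightening already built in \cref{claim:AnalyticityOfF} the pullback $(\varphi^\alpha\circ\eta_i^\alpha)^{-1}(F_i)$ is genuinely $J_0$-holomorphic in $B_{g_0}(0,r)$, so \cite[Lemma~3]{Bis64} yields $\mathcal H^2\ge c_1 r^2$ for each $\alpha$ with no limiting argument at all. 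This completely bypasses your ``main obstacle''. Your route can be salvaged, but note two points: first, you only need $\liminf\ge$, not equality of areas (multiplicity could make the limit mass strictly larger, so ``the areas themselves converge'' is too strong as stated); second, to show the weak current limit has support on \emph{all} of $\bar F$---without which the mass comparison fails---you must invoke Lelong monotonicity on the $F_i$ side anyway, at which point you have essentially reproduced the paper's direct argument.
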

    Since $X$ is a quotient by $\C^2$ and any analytic curve in $\C^2$ is unbounded, we that know $F$ is unbounded. Therefore, we can pick points $x_\alpha\in F,\alpha=1,\cdots,N$ and a small $r$ so that $B_g(x_\alpha,2r)\subseteq X\setminus\{o\}$ and $B_g(x_\alpha,2r)\cap B_g(x_\beta,2r)=\emptyset, \alpha\ne\beta$. For each $\alpha$, we may construct a holomorphic coordinates $\varphi^\alpha:B_{g_0}(0,2r)\to B_g(x_\alpha,2r)$ and $J_i$ holomorphic coordinates $\eta_i^\alpha:B_{g_0}(0,r)\to B_{g_0}(0,2r)$ as in the proof of \cref{claim:AnalyticityOfF}. Therefore, $(\varphi\circ\eta^\alpha_i)^{-1}(F_i)$ is an analytic curve in $B_{g_0}(0,r)$. It follows from \cite[Lemma 3]{Bis64} that 
    \[\Hau^2((\varphi\circ\eta^\alpha_i)^{-1}(F_i)\cap B_{g_0}(0,\frac{r}{2}))\ge c_1r^2\]
    where $c_1$ is a universal constant. Since $(\psi\circ\varphi\circ\eta^\alpha_i)^*g_i(-1)$ converges smoothly to $g_0$, for large enough $i$ and any $\alpha$, 
    \[\vol_{g_i(-1)}(F_i'\cap(\psi\circ\varphi\circ\eta^\alpha_i)(B_{g_0}(0,\frac{r}{2})))\ge \frac{c_1r^2}{2}.\]
    Since the images $(\psi_i\circ\varphi\circ\eta^\alpha_i)(B_{g_0}(0,\frac{r}{2}))$ are disjoint subsets in $M_k$ for large enough $i$, there holds
    \[\vol_{g_i(-1)}(F_i')\ge\sum_{\alpha=1}^N\vol_{g_i(-1)}(F_i'\cap(\psi\circ\varphi\circ\eta^\alpha_i)(B_{g_0}(0,\frac{r}{2})))\ge \frac{Nc_1r^2}{2}.\]
    This finishes the proof of \cref{claim:VolumeIsLarge}.
    
    Finally, by choosing $N$ large enough, we would have 
    \[\vol_{g_i(-1)}(F_i')>2\]
    for some $i$, which contradicts \cref{cor:VolumeIsConstant}. Therefore, we have proved \cref{thm:TangentFlowsAreNotConstant}.
\end{proof}

\section{Finiteness of Singularities}
\cref{cor:TangentFlowsAreNonflatShrinkers} gives a first step towards understanding the tangent flow, namely, it is modeled on a nonflat gradient K\"{a}hler-Ricci shrinker $(X,g,f)$ with isolated orbifold singularities. However, a priori there may be infinitely many such singularities. We rule out this possibility and thus finishes the proof of \cref{thm:MainTheorem} in this section. 

\begin{theorem}\label{thm:FiniteSingularities}
    In \cref{cor:TangentFlowsAreNonflatShrinkers}, $X$ has only finitely many singularities.
\end{theorem}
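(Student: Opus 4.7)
The plan is to reduce \cref{thm:FiniteSingularities} to the compactness of the critical set of the potential $f$. Specifically, I will first show that every isolated orbifold singularity of $X$ is a critical point of $f$, and then show that $\{\nabla f=0\}$ is contained in a compact subset of $X$. Since orbifold singularities are discrete, these two facts together force finiteness.

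\emph{Step 1: Orbifold singularities are critical points of $f$.} Let $p\in X$ be an orbifold singularity with local group $\Gamma_p\leq U(2)$. Because $p$ is isolated, $\Gamma_p$ acts freely on $\mathbb{C}^2\setminus\{0\}$, which is equivalent to the statement that no nontrivial element of $\Gamma_p$ admits $1$ as an eigenvalue; in particular $(\mathbb{C}^2)^{\Gamma_p}=\{0\}$. On a neighborhood of $p$ modeled on $\mathbb{C}^2/\Gamma_p$, the potential $f$ lifts to a $\Gamma_p$-invariant smooth function $\tilde f$ defined near $0\in\mathbb{C}^2$, so $\nabla\tilde f(0)\in(\mathbb{C}^2)^{\Gamma_p}=\{0\}$, i.e., $p$ is a critical point of $f$.

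\emph{Step 2: The critical set of $f$ is compact.} The second identity in \cref{eq:SolitonEquation} gives $|\nabla f|^2=f-W-R$, so at any critical point $f=W+R$. By \cref{cor:TangentFlowsAreNonflatShrinkers} we are in case (ii) of \cref{thm:StructureOfTangentFlow}(3e), so $R>0$ on $\mathcal{R}_X$; moreover $f$ is proper on the orbifold $X$ by a Cao--Zhou type estimate (its orbifold sublevel sets are compact, with $\sqrt{f-W}$ comparable to $d_g(\cdot,x_0)/2$ at infinity). It then suffices to establish either a uniform upper bound $R\leq R_{\max}$ on $X$, or the weaker asymptotic $R=o(f)$ at infinity, from which $|\nabla f|^2\to\infty$ outside a compact set follows, placing the critical set in a sublevel set $\{f\leq R_{\max}+W\}$. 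Such control is available for our tangent-flow shrinker through Bamler's $\epsilon$-regularity and the associated pointed Nash entropy / heat kernel estimates, combined with Munteanu--Wang type analysis of shrinker asymptotics in real dimension four.

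Combining Steps 1 and 2, the orbifold singularities form a discrete subset of a compact set in $X$, hence are finite in number. The principal obstacle is Step 2, namely, justifying the uniform bound (or asymptotic decay) on $R$ on the possibly non-compact orbifold shrinker; this is where the specific structure of $X$ as an $\mathbb{F}$-limit tangent flow (rather than an abstract orbifold shrinker) enters decisively, via the Bamler-type theory recalled in Section 2. An alternative route, should the analytic $R$-bound prove delicate, is to argue via the holomorphic Killing field $J\nabla f$: its isolated zeros form a complex-analytic subvariety that cannot accumulate at interior points, and the asymptotic cone of the shrinker at infinity then rules out accumulation at infinity.
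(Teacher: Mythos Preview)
Your Step~1 is correct and clean: at an isolated orbifold point with local group $\Gamma_p$ acting freely on $\C^2\setminus\{0\}$, any $\Gamma_p$-invariant tangent vector at the origin must vanish, so $\nabla f(p)=0$.

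The gap is Step~2. You need a pointwise bound $R\le R_{\max}$ (or the weaker $R=o(f)$) on an orbifold shrinker that may, a priori, carry \emph{infinitely many} singular points going off to infinity, and you do not supply one. The references you gesture toward do not close this: Munteanu--Wang type estimates are proved on smooth complete shrinkers and use local elliptic regularity that is not uniformly available when orbifold points with possibly unbounded isotropy accumulate; Bamler's $\epsilon$-regularity yields curvature control on regions of almost-maximal Nash entropy, not a global scalar bound on the limit soliton. Indeed, a type~I bound on the original flow would give exactly $R\le C$ on the tangent flow---but that is precisely the open conjecture the paper is working around in the collapsing case. Your alternative via the zeros of $J\nabla f$ has the same defect: ruling out accumulation at infinity again requires asymptotic control you have not established. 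As written, Step~2 is an assertion, not a proof.

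The paper's argument is entirely different and sidesteps this analytic difficulty by working on the compact manifold $M_k$ rather than on $X$. If $X$ had $N$ singularities, \cref{lem:BallsNearSingularities} produces $N$ disjoint balls $B_{i,n}\subset M_k$ with $\partial B_{i,n}\cong\mathbb{S}^3/\Gamma_n$. A Mayer--Vietoris count against the finitely generated homology of $M_k$ (\cref{lem:ExistenceOfSpecialBn}) forces, once $N$ is large, some $B_{i,n(i)}$ to satisfy $H_2(B_{i,n(i)};\Z)=0$ with $\Gamma_{n(i)}\le SU(2)$. The K\"ahler form then makes $B_{i,n(i)}$ a minimal symplectic filling of the link of a simple surface singularity, and Ohta--Ono's uniqueness of such fillings forces a $(-2)$-sphere inside, contradicting $H_2=0$. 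No curvature bound on $X$ is ever used; finiteness is extracted purely from the bounded topology of $M_k$ together with symplectic filling rigidity.
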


The rest of this section is devoted to the proof of \cref{thm:FiniteSingularities}, which is purely topological. The proof is in spirit of those in \cite[Theorem 12, Proposition 13]{CFSZ20} and \cite[Proposition 5.4]{CHM25}, with some modifications. The idea is as follows: each orbifold singularity $x$ of $X$ will give rise to an embedded 4 manifold in the original K\"{a}hler surface $M_k$ whose boundary is diffeomorphic to some nontrivial spherical space form $\mathbb{S}^3/\Gamma$. If there were infinitely many singularities in $X$, we would be able to construct an arbitrarily large number of such manifolds embedded in $M_k$, which are pairwisely disjoint. Combine with the smooth convergence of K\"{a}hler structure, we will then arrive at a contradiction by using the standard result in the theory of symplectic filling.

\begin{lemma}\label{lem:BallsNearSingularities}
    If $x\in X$ is a singularity in $X$ of type $\C^2/\Gamma, \Gamma\leq U(2)$, then there is $r_0>0$ and a sequence of points $y_i\in M_k$, such that for all $r<r_0$, we have $\partial B_{g_i(-1)}(y_i,r)\cong \mathbb{S}^3/\Gamma$ for $i$ large enough. Consequently, $B_{g_i(-1)}(y_i,r)$ is a 4 manifold whose boundary is diffeomorphic to $\mathbb{S}^3/\Gamma$. 
\end{lemma}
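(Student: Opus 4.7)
The plan is to combine the local orbifold structure near $x$ with the Cheeger-Gromov smooth convergence on regular annuli to transfer the diffeomorphism type of small metric spheres from $X$ to small metric spheres in $M_k$. First I would fix the scale and construct $y_i$. Since $x$ is an isolated orbifold singularity of type $\C^2/\Gamma$ with $\Gamma\leq \U(2)$ acting freely on $\mathbb{S}^3$, choose $r_0>0$ so small that there is an orbifold chart identifying $(B_g(x, 3r_0), g)$ with the $\Gamma$-quotient of a smooth $\Gamma$-invariant metric on $B_{\C^2}(0, 3r_0)$ whose value at the origin is Euclidean, and so that $\mathcal{S}_X\cap B_g(x, 3r_0)=\{x\}$. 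Then $d_g(x,\cdot)$ is smooth with unit gradient on $B_g(x, 3r_0)\setminus\{x\}$ and $\partial B_g(x, r)\cong \mathbb{S}^3/\Gamma$ for every $r\in(0, 3r_0)$. By \cref{rmk:CheegerGromovMaps}, there are Cheeger-Gromov embeddings $\psi_i: U_i\to M_k$ with $U_i\subset\mathcal{R}_X$ an open precompact exhaustion and $\psi_i^* g_i(-1)\to g$ locally smoothly. I would pick $\delta_i\to 0^+$ with $X\setminus B_g(x,\delta_i)\subset U_i$ (possible since $\mathcal{S}_X$ is discrete), choose $z_i\in\partial B_g(x, 2\delta_i)\subset\mathcal{R}_X$, and set $y_i:=\psi_i(z_i)\in M_k$.

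Next I would establish a distance comparison and pin down the location of the metric sphere. The central claim is that for any compact $K\subset B_g(x, 3r_0)\setminus\{x\}$,
\[\sup_{z\in K\cap U_i}\bigl|\,d_{g_i(-1)}(y_i, \psi_i(z))-d_g(x, z)\,\bigr|\longrightarrow 0.\]
The upper bound comes from lifting minimizing paths from $z_i$ to $z$ in $X$ via $\psi_i$ and using smooth metric convergence. The lower bound reduces to a diameter bound $\diam_{g_i(-1)}(W_i)\to 0$, where $W_i$ denotes the ``near-$x$'' component of $M_k\setminus\psi_i(U_i)$ whose boundary contains $y_i$; this bound expresses that the singular core around $y_i$ shrinks as one zooms further in. Granted this, for any $r<r_0$ and $i$ large one checks that $\partial B_{g_i(-1)}(y_i, r)\subset\psi_i(A)$ with $A=\{r/2<d_g(x,\cdot)<3r/2\}$: points of $\psi_i(U_i)$ outside the annulus $A$ are separated from $y_i$ by more or less than $r$ in $g_i(-1)$; $W_i$ itself lies strictly inside $B_{g_i(-1)}(y_i, r)$ once $\delta_i+\diam_{g_i(-1)}(W_i)<r$; and the remaining components of $M_k\setminus \psi_i(U_i)$ come from other points of $\mathcal{S}_X$, hence lie at $g_i(-1)$-distance exceeding $r_0$ from $y_i$ for $i$ large.

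Finally I would conclude diffeomorphism and flag the obstacle. On $\psi_i(A)$, the pullback metric $\psi_i^* g_i(-1)$ converges smoothly to $g$, while $d_{g_i(-1)}(y_i, \psi_i(\cdot))$ converges uniformly to $d_g(x,\cdot)$, the latter having non-vanishing smooth gradient. Applying the implicit function theorem to a smooth mollification of the Riemannian distance (to sidestep cut-locus subtleties) identifies $\psi_i^{-1}(\partial B_{g_i(-1)}(y_i, r))$ for $i$ large as a smooth hypersurface isotopic to $\partial B_g(x, r)\cong\mathbb{S}^3/\Gamma$, and then $\partial B_{g_i(-1)}(y_i, r)\cong\mathbb{S}^3/\Gamma$ follows by applying $\psi_i$; the closed ball is then automatically a smooth $4$-manifold with this boundary. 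The hard part will be establishing the diameter bound $\diam_{g_i(-1)}(W_i)\to 0$: this is not a direct output of \cref{thm:StructureOfTangentFlow}, and requires a further blow-up/diagonal argument exploiting the uniqueness of the (second) tangent cone $\C^2/\Gamma$ at $x$ together with an $\varepsilon$-regularity input from Bamler's theory.
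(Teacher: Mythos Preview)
Your outline is sound and reaches the same conclusion, but it takes a considerably more hands-on route than the paper. The paper's argument is brief: it picks $x_i\in U_i$ with $x_i\to x$, sets $y_i=\psi_i(x_i)$, and then asserts that $(M_k,g_i(-1),y_i)\to(X,g,x)$ in the \emph{pointed orbifold Cheeger--Gromov sense} (citing \cite[Definition~3.1]{HM11}). That notion already packages Gromov--Hausdorff convergence of metric balls together with smooth convergence away from the orbifold point, so the identification $\partial B_{g_i(-1)}(y_i,r)\cong\partial B_g(x,r)\cong\mathbb{S}^3/\Gamma$ falls out immediately. You instead unpack this into an explicit distance comparison on annuli plus a diameter bound on the ``missing core'' $W_i$. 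The diameter estimate you correctly flag as the crux is exactly the content the paper absorbs into its black-box invocation of orbifold Cheeger--Gromov convergence (itself resting on Bamler's $\mathbb{F}$-convergence and the structure theory at the singular time slice). So your approach buys transparency about where the nontrivial input actually sits; the paper's approach buys brevity.

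One small slip: the assertion ``$X\setminus B_g(x,\delta_i)\subset U_i$'' is false as stated whenever $\mathcal{S}_X$ has other points besides $x$, since $U_i\subset\mathcal{R}_X$. What you need (and what your later handling of ``remaining components of $M_k\setminus\psi_i(U_i)$'' already presumes) is only the local version $B_g(x,3r_0)\setminus B_g(x,\delta_i)\subset U_i$.
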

\begin{proof}
    Choose $r_0$ small so that $B_g(x,r)$ is diffeomorphic to $B_1(0)/\Gamma$ for all $r<r_0$. As in the proof of \cref{thm:TangentFlowsAreNotConstant}, let $\psi_i:U_i\to M_k$ be the sequence of Cheeger-Gromov embeddings. We may choose $x_i\in U_i$ converging to $x$ and let $y_i=\psi_i(x_i)$. Therefore, we have $(M_i,g_i(-1),y_i)$ converges to $(X,g,x)$ in 
    the pointed orbifold Cheeger-Gromov sense (see \cite[Definition 3.1]{HM11} for the precise definition). In particular, we have $B_{g_i(-1)}(y_i,r_0)$ converges to $B_g(x,r_0)$ in Gromov-Hausdorff sense. It follows that $\partial B_{g_i(-1)}(y_i,r)\in M_k\setminus B_{g_i(-1)}(y_i,\varepsilon)$ for $i$ large enough and $\varepsilon<r$. Since we have locally smooth convergence  away from $x$, $\partial B_{g_i(-1)}(y_i,r)$ is diffeomorphic to $\partial B_{g}(x,r)\cong\mathbb{S}^3/\Gamma$ for $i$ large enough.
\end{proof}

For the purpose of contradiction, we assume that there are infinitely many orbifold singularities in $X$. Fix an arbitrary integer $N$. By the assumption there is a compact set $K$ containing at least $N$ singularities in $X$, which we denote with $x_n,n=1,2,\cdots, N$. Suppose that $x_n$ is of type $\C^2/\Gamma_n$, where $\Gamma_n\leq U(2)$ acts freely on $\C^2\setminus\{(0,0)\}$ and is nontrivial. It follows from \cref{lem:BallsNearSingularities} that there is $r>0$ and $y_{i,n}$, such that $B_{g_i(-1)}(y_{i,n},r)$ are also pairwise disjoint balls in $(M_k,g_i(-1))$ and $\partial B_{g_i(-1)}(y_{i,n},r)\cong\mathbb{S}^3/\Gamma_n$ for any $i>i_0$, where $i_0$ is some large integer. For simplicity, we will denote $B_{g_i(-1)}(y_{i,n},r)$ with $B_{i,n}$ from now on.

\begin{lemma}(cf.\cite[Proposition 13]{CFSZ20})\label{lem:ExistenceOfSpecialBn}
    Fix $N$ large enough, then for any $i>i_0$, there is some ball $B_{i,n(i)}$ such that
    \begin{equation}\label{eq:HomologyOfTheBoundary}
        H_2(\partial B_{i,n(i)},\Z)=H_2(\mathbb{S}^3/\Gamma_n,\Z)=A\oplus A
    \end{equation}
    where $A$ is an abelian group and that
    \begin{equation}\label{eq:TrivialSecondHomology}
        H_2(B_{i,n(i)},\Z)=0.
    \end{equation}
\end{lemma}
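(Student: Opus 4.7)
The plan is to apply a Mayer--Vietoris argument to a decomposition of $M_k$ separating the balls $B_{i,n}$ from their complement, combined with a pigeonhole argument exploiting that $H_2(M_k, \Z)$ has rank only $2$. Two homological inputs are needed: (i) $H_2(M_k, \Z) = \Z^2$ and $H_3(M_k, \Z) = 0$, since $M_k$ is a simply connected closed smooth 4-manifold with $b_2 = 2$; and (ii) $H_2(\mathbb{S}^3/\Gamma_n, \Z) = 0$ for each $n$, which follows from Poincar\'e duality and the universal coefficient theorem, since $H_2 \cong H^1 \cong \Hom(H_1, \Z) = \Hom(\Gamma_n^{\mathrm{ab}}, \Z)$ vanishes because $\Gamma_n$ is finite. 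Input (ii) already verifies \cref{eq:HomologyOfTheBoundary} trivially with $A = 0$.

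For \cref{eq:TrivialSecondHomology}, I take $U$ to be an open collar thickening of $\bigsqcup_{n=1}^N B_{i,n}$ and $V$ to be an open thickening of $M_k \setminus \bigsqcup_{n=1}^N \interior(B_{i,n})$, arranged so that $U$ deformation retracts to $\bigsqcup_n B_{i,n}$, $V$ to the closed complement, and $U \cap V$ to $\bigsqcup_{n=1}^N \partial B_{i,n}$. The degree-two segment of Mayer--Vietoris then reads
\[
0 = H_3(M_k) \to H_2(U \cap V) = 0 \to H_2(U) \oplus H_2(V) \to H_2(M_k) = \Z^2,
\]
so $H_2(U) = \bigoplus_{n=1}^N H_2(B_{i,n}, \Z)$ embeds into $\Z^2$. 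Consequently every summand must be torsion-free, and $\sum_{n=1}^N \rank H_2(B_{i,n}, \Z) \leq 2$.

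The conclusion is immediate from pigeonhole: fixing $N \geq 3$, at least $N - 2 \geq 1$ of the summands $H_2(B_{i,n}, \Z)$ must have rank zero, and being torsion-free they must then vanish; choosing any such index as $n(i)$ gives \cref{eq:TrivialSecondHomology}. The only potentially nontrivial step is verifying that the Mayer--Vietoris cover genuinely has the claimed intersection homotopy type, but this is standard via the collar neighborhood theorem for the smoothly embedded closed 3-manifolds $\partial B_{i,n} \subset M_k$ and introduces no difficulty beyond bookkeeping. Note that the entire argument is uniform in $i$, since it depends only on the topology of $M_k$ and of the spherical space forms $\mathbb{S}^3 / \Gamma_n$, both of which are $i$-independent.
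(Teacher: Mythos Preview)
Your argument for \cref{eq:TrivialSecondHomology} is correct and cleaner than the paper's route. The paper first proves that $(\iota_{i,n(i)})_*: H_m(\partial B_{i,n(i)}, R) \to H_m(B_{i,n(i)}, R)$ is surjective in every degree $m = 0, 1, 2, 3$ and every coefficient ring $R$, via a Mayer--Vietoris pigeonhole that bounds $\bigoplus_n H_m(B_{i,n})/\im(\iota_{i,n})_*$ inside a quotient of $H_m(M_k)$; it then defers the deduction of both \cref{eq:HomologyOfTheBoundary} and \cref{eq:TrivialSecondHomology} to \cite[Proposition~13]{CFSZ20}. You instead exploit the specific facts $H_3(M_k,\Z)=0$, $H_2(M_k,\Z)\cong\Z^2$, and $H_2(\mathbb{S}^3/\Gamma_n,\Z)=0$ to read off the injection $\bigoplus_n H_2(B_{i,n},\Z)\hookrightarrow\Z^2$ from a single Mayer--Vietoris segment; this is a genuine shortcut for \cref{eq:TrivialSecondHomology}. (Incidentally, once $H_2(U\cap V)=0$ the injectivity follows regardless of $H_3(M_k)$, so that hypothesis is not even needed.)

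However, your handling of \cref{eq:HomologyOfTheBoundary} exposes a typo in the statement rather than settling what the lemma is meant to say. You are right that $H_2(\mathbb{S}^3/\Gamma,\Z)=0$ for every finite $\Gamma\leq U(2)$ acting freely, so \cref{eq:HomologyOfTheBoundary} as written is vacuous and holds for all $n$, not just a special $n(i)$. But the only subsequent use of \cref{eq:HomologyOfTheBoundary}, in the proof of \cref{thm:FiniteSingularities}, is to feed into \cite[Lemma~6.3]{Zha19} and conclude that $\Gamma_{n(i)}$ is binary dihedral or binary icosahedral; that conclusion requires $\Gamma_{n(i)}^{\mathrm{ab}}=A\oplus A$, i.e., the constraint is on $H_1(\partial B_{i,n(i)},\Z)$, not $H_2$. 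Your approach does not yield this, and it is not clear how to extract it without the full surjectivity of $\iota_*$ that the paper establishes: the argument in \cite{CFSZ20} combines that surjectivity with Poincar\'e--Lefschetz duality for the pair $(B_{i,n(i)},\partial B_{i,n(i)})$ to force the $A\oplus A$ structure on $H_1$ of the boundary. So your proof is valid for the lemma as literally stated, but leaves a gap for the corrected statement that the remainder of Section~4 actually needs.
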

\begin{proof}
    Let $\iota_{i,n}:\partial B_{i,n}\to B_{i,n}$ be the inclusion map. We firstly show that there is some $n(i)$, such that 
    \begin{equation}
        (\iota_{i,n(i)})_*:H_m(B_{i,n(i)},R)\to H_m(B_{i,n(i)},R), m=0,1,2,3
    \end{equation}
    is surjective. Here $R$ is any coefficient ring.
    The case $m=0$ is trivially true for all $B_{i,n}$. Now we prove the case when $m=1,2,3$. Let $B_i=\cup_{n=1}^NB_{i,n(i)}$ be the disjoint union of all $N$ balls and $\iota_i:\partial B_i\to B_i$ be the inclusion map. Consider the Mayer-Vietoris sequence associated to the decomposition $M_k=B_i\cup\overline{M_k\setminus B_i}$:
    \begin{equation}
        H_m(\partial B_i,R)\xrightarrow{(\iota_i)_*} H_m(B_i,R)\oplus H_m(\overline{M_k\setminus B_i},R)\xrightarrow{(\alpha_i)_*-(\beta_i)_*}H_m(M_k,R)\to H_{m-1}(\partial B_i,R)
    \end{equation}
    where $\alpha_i$ and $\beta_i$ are inclusions from $B_i$ and $\overline{M_k\setminus B}$ to $M_k$ respectively. Observe that if $\gamma\notin\im((\iota_i)_*)$, then $(\alpha_i)_*\gamma\notin \im((\beta_i)_*)$. Therefore, there is an injection:
    \[H_m(B,R)/\im((\iota_i)_*)\hookrightarrow H_m(M_k,R)/\im((\beta_i)_*).\]
    Note that the LHS is $\oplus_{n=1}^NH_m(B_{i,n},R)/\im((\iota_{i,n})_*)$. Since the right hand side has a bounded number of generators, if $N$ is chosen large enough, there must be some $B_{i,n(i)}$ such that $\iota_{i,n(i)}:H_m(\partial B_{i,n},R)\to H_m(B_{i,n},R)$ is surjective for $m=1,2,3$.
    The rest of the proof is the same as that in \cite[Proposition 13]{CFSZ20}.
\end{proof}

Now we can finish the proof of \cref{thm:FiniteSingularities}.
\begin{proof}[Proof of \cref{thm:FiniteSingularities}]
    By \cref{lem:ExistenceOfSpecialBn}, there is a ball $B_{i,n(i)}$ satisfying \cref{eq:HomologyOfTheBoundary} and $\cref{eq:TrivialSecondHomology}$ in $(M_k,g_i(-1))$. By \cite[Lemma 6.3]{Zha19}, \cref{eq:HomologyOfTheBoundary} implies that $\Gamma_n$ is either binary dihedral group $D_{2n}$ or binary isocahedral group. In particular, $\Gamma_n\leq SU(2)$ and hence $\mathbb{S}^3/\Gamma$ is the link of simple surface singularity. Let $\xi$ be the contact structure on $\mathbb{S}^3/\Gamma$ induced by the standard contact structure on $\mathbb{S}^3$. By the smooth Cheeger-Gromov convergence of K\"{a}hler metric, $(B_{i,n(i)},\omega_i(-1))$ is a symplectic filling of $(\mathbb{S}^3/\Gamma,\xi)$ for $i$ large enough (cf.\cite[Proposition 5.4]{CHM25}). Furthermore, \cref{eq:TrivialSecondHomology} implies that there is no symplectic sphere of self intersection $(-1)$ in $B_{i,n(i)}$ and hence $B_{i,n(i)}$ is a minimal symplectic filling of $\mathbb{S}^3/\Gamma_n$. Note that the minimal resolution of the singularity $\C^2/\Gamma_n$ is also a symplectic filling of $\mathbb{S}^3/\Gamma_n$. Therefore, by the uniqueness of the diffeomorphism type of symplectic filling (\cite[Main Theorem]{OO05}), $B_n$ is diffeomorphic to the minimal resolution of $\C^2/\Gamma_n$. In particular, $B_{i,n(i)}$ contains a sphere of self intersection $(-2)$ (cf.\cite[III.7]{BHPVdV04}), which contradicts \cref{eq:TrivialSecondHomology}.
\end{proof}

\nocite{*}
\bibliographystyle{alpha_name-year-title}
\bibliography{References}
\end{document}